\newcommand{\B}{\mathcal{B}}
\newcommand{\I}{\mathcal{I}}
\newcommand{\R}{\mathbb{R}}
\newcommand{\Z}{\mathbb{Z}}
\newcommand{\Rten}{R_{10}}
\newcommand{\Rtwelve}{R_{12}}
\newcommand{\onesum}{\oplus_1}
\newcommand{\twosum}{\oplus_2}
\newcommand{\threesum}{\oplus_\Delta}
\newcommand{\dual}[1]{{#1}^*}
\newcommand{\binary}{\mathbb{F}_2}
\newcommand{\bgp}[1]{f_{#1}}
\newcommand{\symdiff}{\triangle}
\newcommand{\ratfun}{\R(x)}
\DeclareMathOperator{\trop}{trop}
\title{Arithmetic Circuits and Neural Networks for Regular Matroids}
\author{Christoph Hertrich\inst{1}, Stefan Kober\inst{2}, Georg Loho\inst{3}\inst{4}}
\institute{\ \inst{1}University of Technology Nuremberg, Germany\\\ \inst{2}Université Libre de Bruxelles, Belgium\\\ \inst{3}Freie Universität Berlin, Germany\\\ \inst{4}University of Twente, The Netherlands\\\email{
christoph.hertrich@utn.de}, \email{stefan.kober@ulb.be}, \email{
georg.loho@math.fu-berlin.de}}
\date{\today}
\begin{document}

\maketitle

\begin{abstract}
	We prove that there exist uniform $(+,\times,/)$-circuits of size $O(n^3)$ to compute the basis generating polynomial of regular matroids on $n$ elements. By tropicalization, this implies that there exist uniform $(\max,+,-)$-circuits and ReLU neural networks of the same size for weighted basis maximization of regular matroids. As a consequence in linear programming theory, we obtain a first example where taking the \emph{difference of two extended formulations} can be more efficient than the best known individual extended formulation of size $O(n^6)$ by Aprile and Fiorini. 
    Such differences have recently been introduced as \emph{virtual extended formulations}. The proof of our main result relies on a fine-tuned version of Seymour's decomposition of regular matroids which allows us to identify and maintain graphic substructures to which we can apply a \emph{local} version of the \emph{star-mesh transformation}.
\end{abstract}

\section{Introduction}
A key challenge in complexity theory is to characterize the computational power of \emph{arithmetic circuits}, the algebraic counterparts of Boolean circuits.
In such a circuit, every gate outputs an arithmetic expression, like the sum or product of its inputs.  The fundamental underlying motivation for this field of research is to understand which problems one can solve efficiently with a predefined set of algrebraic operations. The most frequently studied type of arithmetic circuits is $(+,\times)$-circuits~\cite{shpilka2010arithmetic}, as they form a very natural model to represent polynomials. However, allowing additional operations, e.g., subtraction or division, sometimes makes the model of computation exponentially more powerful~\cite{valiant1979negation,jerrum1982some}. This leads to the question of how the complexity of representing certain functions depends on the operations allowed in the circuit. To this end, \citet{fomin2016subtraction} coined the notion of \emph{subtraction-free complexity}, which is the study of $(+,\times,/)$-circuits. These circuits are of particular interest because they can be \emph{tropicalized}: they give rise to corresponding $(\max,+,-)$-circuits computing \emph{tropical polynomials} or \emph{tropical rational functions}~\cite{jukna2023tropical}.

One motivation to study tropical circuits is to prove lower bounds on (pure) dynamic programming algorithms~\citep{jukna2015lower}. Our primary motivation, however, is the expressive power of feed-forward neural networks with ReLU activations, the most commonly used activation function in modern machine learning. Like $(\max,+,-)$-circuits, ReLU networks represent \emph{continuous and piecewise linear} (CPWL) functions~\cite{arora2018understanding}, which can be understood as tropical rational functions. The tropical viewpoint on neural networks was initiated by \citet{zhang2018tropical} and \citet{charisopoulos2018tropical}, and was subsequently used in many theoretical works on neural networks, e.g., to prove lower bounds on the depth~\cite{hertrich2023towards,haase2023lower}.

We contribute to an emerging stream of research studying ReLU networks as a kind of arithmetic circuit~\cite{hertrich2024relu,bakaev2025depth}. 
In fact, ReLU networks can be interpreted as $(\max,+,-)$-circuits with the additional power of multiplying intermediate results with real-valued constants (through the \emph{weights} of the neural network). Thereby, using the idea of tropicalizing $(+,\times,/)$-circuits, this yields a recipe to transform subtraction-free circuits into ReLU networks.

\paragraph{Contribution.} Our main result is the construction of efficient $(+,\times,/)$-circuits for the basis generating polynomial of regular matroids. For a matroid~$M$ with basis set $\B$, the \emph{basis generating polynomial} is $\bgp{M}\coloneqq\sum_{B\in\B}x^B\coloneqq\sum_{B\in\B}\prod_{e\in B} x_e$.

\begin{restatable}{theorem}{mainthm}\label{main-thm}
    For a regular matroid $M$ with $n$ elements, there is a $(+,\times,/)$-circuit of size $O(n^3)$ computing the basis generating polynomial $\bgp{M}$.
    Given an independence oracle of $M$, this circuit can be constructed in polynomial time.
\end{restatable}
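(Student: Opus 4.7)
The plan is to induct on the structure provided by Seymour's decomposition theorem: every regular matroid $M$ is obtained from graphic matroids, cographic matroids, and the exceptional matroid $\Rten$ by a sequence of $1$-, $2$-, and $3$-sums. I would therefore build efficient $(+,\times,/)$-circuits separately for the three base cases, and then show that each sum operation can be simulated by a small circuit gadget without introducing any subtractions.

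For graphic matroids $M(G)$, the basis generating polynomial equals the spanning-tree polynomial of $G$. A direct application of Kirchhoff's matrix-tree theorem does not suffice, since evaluating a determinant with $(+,\times,/)$-gates alone is nontrivial. Instead, I would use the \emph{star-mesh transformation}: eliminating a vertex $v$ by replacing the incident star with a weighted complete graph on the neighbors of $v$ preserves the spanning-tree polynomial up to a positive scalar factor, and all new edge weights are positive rational expressions in the old ones. Iterating this elimination over all vertices produces a $(+,\times,/)$-circuit of size $O(n^3)$. For cographic matroids, the identity $\bgp{\dual{M}}(x)=\bigl(\prod_{e}x_e\bigr)\cdot \bgp{M}(x^{-1})$ gives the circuit with only linear overhead, and $\Rten$ admits a constant-size circuit since it is a fixed matroid.

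The main work is in simulating the $2$- and $3$-sums, and this is where the abstract's hint becomes essential: refine Seymour's decomposition so that at every sum the gluing region is contained in a \emph{graphic substructure} on at least one side. The sum can then be realized as an actual graph operation---identifying an edge in the $2$-sum case, or identifying a triangle in the $3$-sum case---after which a \emph{local} star-mesh transformation, restricted to the $O(1)$ gluing elements, produces the merged circuit using only positive operations. This bypasses the sign-alternating algebraic identities one would otherwise be forced to use when treating each summand as a black-box matroid.

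I expect the fine-tuning of the decomposition to be the main obstacle: Seymour's theorem in its original form does not guarantee that the three-element separator of a $3$-sum sits inside a graphic substructure, so one must either invoke a structural strengthening or carefully route non-graphic sides through duality and through the finitely many $\Rten$-atoms. Once this is in place, the size bound follows from a standard charging argument on the decomposition tree: the pieces correspond to an essentially edge-disjoint partition of the ground set, each local gluing gadget contributes only $O(1)$ gates, and the base-case circuits contribute $O(n_i^3)$ gates per piece of size $n_i$, telescoping to $O(n^3)$ overall. The polynomial-time construction of the circuit from an independence oracle then follows from the known polynomial-time algorithms for computing Seymour's decomposition, due to Truemper.
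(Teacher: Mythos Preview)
Your skeleton is correct—Seymour decomposition, star-mesh for graphic atoms, duality for cographic, a constant circuit for $\Rten$, and the crucial refinement that the $3$-sum glue sit inside a graphic piece on one side (this is exactly \Cref{prop:3-connected-child}). The genuine gap is your mechanism for the $3$-sum. You assert that once the triangle $D$ lies in a graphic $M_2$, ``a local star-mesh transformation, restricted to the $O(1)$ gluing elements, produces the merged circuit'' at $O(1)$ cost. But the star-mesh you have at your disposal is a \emph{graph} operation, and $M=M_1\threesum M_2$ is not a graph; there is no vertex of $M$ to eliminate, and you give no $O(1)$-gate subtraction-free formula combining $\bgp{M_1}$ and $\bgp{M_2}$ (the compatibility condition in case~\eqref{Delta3} of the $\Delta$-sum basis description obstructs any naive product decomposition, which is ultimately why the prior extension-complexity bound is $O(n^6)$ rather than $O(n^3)$).

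The paper's fix is to work globally with Maurer's matrix-tree identity $\bgp{M}=\det(AXA^\intercal)$ for a TU representation $A$ of the \emph{whole} matroid $M$, and to define a \emph{generalized} star-mesh that eliminates a row of $A$ (\Cref{gen-star-mesh}, \Cref{matrix-star-mesh}). The refined decomposition ensures that a vertex of the graph underlying $M_2$, \emph{away from} the gluing triangle, yields a cocircuit of all of $M$ (\Cref{cut_cocircuit}) and hence a row of $A$ whose elimination reduces $M$ to $M_1\threesum^+ M(K_{\ell-1})$ while preserving total unimodularity (\Cref{prop:removevertex}). Iterating peels off all of $M_2$ at cost $O(|E(M_2)|^3)$ and leaves $M_1$, on which one recurses. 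So the star-mesh acts at non-triangle vertices of the graphic side rather than at the gluing elements, the recursion is $\bgp{M}\to\bgp{M_1}$ rather than a merge of two black-box circuits, and the step you are missing is precisely the lemma that this matrix-level elimination keeps the representation TU—this is what fails for $\Rten$ and is why the graphic refinement is indispensable.
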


\Cref{main-thm} generalizes a result by \citet{fomin2016subtraction} on subtraction-free circuits for the spanning tree polynomial. In circuit complexity, a circuit family whose members can be computed in polynomial time is called \emph{uniform}.

\paragraph{Implications on neural networks.}

Using the idea of tropicalization described above, we obtain the following.

\begin{corollary}\label{cor:tropical-main}
    For a regular matroid $M$ with basis set $\B$ and $n$ elements, there is (i) a $(\max,+,-)$-circuit and (ii) a ReLU neural network of size $O(n^3)$ computing the tropical polynomial $\max_{B\in\B}\sum_{e\in B} x_e$. Given an independence oracle of $M$, the circuit and the neural network can be constructed in polynomial time.
\end{corollary}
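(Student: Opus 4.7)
The plan is to transfer the $(+,\times,/)$-circuit from \Cref{main-thm} to the tropical semiring and then to compile the resulting tropical circuit into a ReLU network. The main conceptual work has already been carried out in the main theorem; what remains is essentially a syntactic transformation, and I do not expect a serious obstacle. The only point requiring care is that the tropicalization recipe is valid precisely because the circuit produced by \Cref{main-thm} is subtraction-free; this is the reason the main theorem is phrased in terms of $(+,\times,/)$-circuits rather than allowing subtraction.

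Starting from the circuit $C$ of size $O(n^3)$ computing $\bgp{M}$, I would construct its tropicalization $\trop(C)$ gate by gate, replacing each $+$-gate with a $\max$-gate, each $\times$-gate with a $+$-gate, and each $/$-gate with a $-$-gate, while keeping the input variables $x_e$ unchanged. The size is preserved exactly, so $\trop(C)$ is a $(\max,+,-)$-circuit of size $O(n^3)$.

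Next, I would justify that $\trop(C)$ computes the desired function. The guiding principle is that on subtraction-free rational expressions tropicalization is compatible with evaluation: for any polynomial $\sum_\alpha c_\alpha x^\alpha$ with strictly positive coefficients, $\trop(\sum_\alpha c_\alpha x^\alpha)=\max_\alpha \langle \alpha, x\rangle$, and this extends to quotients via $\trop(f/g)=\trop(f)-\trop(g)$. Since every coefficient of $\bgp{M}=\sum_{B\in\B}x^B$ equals $1$, its tropicalization is exactly $\max_{B\in\B}\sum_{e\in B}x_e$, establishing part~(i). I would cite the standard references \cite{fomin2016subtraction,jukna2023tropical} here rather than repeating the argument.

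Finally, for part~(ii) I would simulate the $(\max,+,-)$-circuit by a ReLU network. Additions, subtractions, and multiplications by fixed real constants are absorbed into the affine maps of the network for free, and each $\max$-gate is realized via the identity $\max(a,b)=a+\mathrm{ReLU}(b-a)$ using a single ReLU unit. Hence the ReLU network has the same asymptotic size $O(n^3)$. Uniformity is inherited from \Cref{main-thm}, since both the tropicalization step and the ReLU simulation are purely syntactic and computable in linear time from $C$.
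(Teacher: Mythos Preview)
Your proposal is correct and follows essentially the same route as the paper: apply \Cref{prop:circuit-tropicalization} to tropicalize the circuit from \Cref{main-thm}, observe that $\trop(\bgp{M})=\max_{B\in\B}\sum_{e\in B}x_e$, and then compile the resulting $(\max,+,-)$-circuit into a ReLU network as in \Cref{prop:circuit-to-nn}. The only cosmetic difference is that you realize each $\max$-gate with a single ReLU via $\max(a,b)=a+\mathrm{ReLU}(b-a)$ (absorbing the affine part into the next layer's weights), whereas the paper budgets three ReLU neurons per $\max$-gate; both yield $O(n^3)$.
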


Evaluating this tropical polynomial can be seen as solving the \emph{maximum weight basis problem} over the regular matroid, which can famously be achieved by the greedy algorithm. However, as described by \citet{hertrich2024relu}, finding efficient neural network representations to solve combinatorial optimization problems like this remains non-trivial, as the computational model of neural networks is missing simple algorithmic building blocks like if-branchings based on the comparison of real numbers or computing argmins over finite sets. These limitations prohibit the approach of simply implementing the greedy algorithm on a ReLU neural network. 
Consequently, \Cref{cor:tropical-main} requires a deeper understanding of the structure of regular matroids and provides new non-trivial insights on the power of ReLU neural networks as a model of computation.

\paragraph{Implications in linear programming theory.}

Combining \Cref{cor:tropical-main} with the connection between neural networks and extension complexity established in~\cite{hertrich2024neural}, we immediately obtain the following.

\begin{corollary}
    There exists a \emph{virtual} extended formulation of size $O(n^3)$ of the matroid base polytope of a regular matroid on $n$ elements.
\end{corollary}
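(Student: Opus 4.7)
The plan is to compose \Cref{cor:tropical-main} with the ReLU-network-to-virtual-extended-formulation translation of~\cite{hertrich2024neural}. The first step is to observe that the tropical polynomial from \Cref{cor:tropical-main} is precisely the support function of the matroid base polytope $P_M \coloneqq \operatorname{conv}\{\chi^B : B \in \B\} \subseteq \R^n$, where $\chi^B\in\{0,1\}^n$ denotes the characteristic vector of a basis $B$. Indeed,
\[
h_{P_M}(x) \;=\; \max_{B \in \B} \langle \chi^B, x\rangle \;=\; \max_{B \in \B} \sum_{e \in B} x_e,
\]
which is exactly the CPWL function computed by the ReLU network of size $O(n^3)$ promised by \Cref{cor:tropical-main}.

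Next, I would invoke the correspondence from~\cite{hertrich2024neural} between ReLU networks and virtual extended formulations. That correspondence takes a ReLU network of size $s$ computing the support function of a polytope $P$ and produces a virtual extended formulation of $P$ of size $O(s)$. The underlying idea is a difference-of-convex decomposition of the network's output, writing $f = g - h$ for two convex CPWL functions $g,h$, each of which is itself the support function of a polytope; the two polytopes $G,H$ satisfy $P + H = G$ as a Minkowski identity, and compact extended formulations of $G$ and $H$ (obtained gate-by-gate from the network's addition, scalar-multiplication, and max-with-zero operations) together form a virtual extended formulation of $P$. Applying this to the network from \Cref{cor:tropical-main} directly yields a virtual extended formulation of $P_M$ of size $O(n^3)$.

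The only point worth verifying — and really the only non-bookkeeping step — is that the hypothesis of the translation of~\cite{hertrich2024neural} is satisfied, i.e., that the network indeed computes the support function of an honest polytope rather than an arbitrary CPWL function. This is automatic here: the tropical polynomial $\max_{B \in \B} \sum_{e \in B} x_e$ is manifestly convex and positively homogeneous, and equals $h_{P_M}$ by the identification above. Consequently no further technical work beyond assembling the two cited results is needed to deduce the claimed $O(n^3)$-size virtual extended formulation.
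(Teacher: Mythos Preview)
Your proposal is correct and follows exactly the paper's approach: the paper simply states that the corollary follows ``immediately'' by combining \Cref{cor:tropical-main} with the ReLU-network-to-virtual-extended-formulation correspondence of~\cite{hertrich2024neural}. You have in fact supplied more detail than the paper does, correctly identifying the tropical polynomial as the support function of $P_M$ and sketching the difference-of-convex mechanism behind the cited translation.
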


This is in contrast to the best known extension complexity upper bound $O(n^6)$ from \cite{aprile2022regular}. 
A \emph{virtual} extended formulation of a polytope $P$ consists of extended formulations for two polytopes $Q$ and $R$ such that $P+Q=R$, where $P+Q$ is a Minkowski sum. 
This allows to solve the linear optimization problem over $P$ by solving it over $Q$ and $R$ and taking the difference of the two results~\cite{hertrich2024neural}. 
It is an open question whether taking a single difference of two LPs brings an advantage over solving just one LP, that is, whether virtual extended formulations can be more efficient than ordinary ones. To the best of our knowledge, our result provides the first example of virtual extended formulations that are smaller than the best known extended formulations.

\paragraph{Generalization to MFMC matroids.}
Our results extend to the more general class of Max-Flow-Min-Cut (MFMC) matroids, see \Cref{cor:MFMC}. 
This class of matroids has been introduced by \citet{seymour1977matroids}, and MFMC matroids have been shown to have a decomposition that extends Seymour's decomposition of regular matroids~\cite{seymour1980decomposition}, see~\cite{seymour1977matroids} and~\cite[Corollary~12.3.22]{oxley2006matroid}.
We formally define MFMC matroids and explain how our main result can be generalized in \Cref{sec:mfmc}.

\paragraph{Proof techniques.}
A key ingredient to prove our main result is Maurer's generalization~\cite{maurer1976matrix} of Kirchhoff's celebrated matrix-tree theorem~\cite{kirchhoff1847ueber} in order to view the basis generating polynomial of a regular matroid $M$ as the determinant of an $r$-dimen\-sional square matrix $L$, where $r$ is the rank of $M$.
Since a general determinant cannot be computed with a $(+,\times,/)$-circuit, we need to exploit the specific structure of the matrix $L$.
We can understand the approaches by \citet{fomin2016subtraction} and \citet{hertrich2024relu} for the case of spanning trees in graphs as an inductive approach to reduce the number of rows of $L$ by one in each step.
In terms of graphs, this strategy corresponds to eliminating one vertex at a time, and compensating for the lost information by introducing additional edges between neighbors of the removed vertex. This procedure is known as a \emph{star-mesh transformation}~\cite{fomin2016subtraction}.
In general regular matroids, however, the concept of a vertex does not exist, making it difficult to generalize this approach. Nevertheless, we introduce a generalization of the star-mesh transformation to $\{0,\pm1\}$-matrices, see \Cref{gen-star-mesh}.
However, this generalization suffers from the fact that it does not preserve regularity, rendering the application of Maurer's theorem invalid in later iterations. In fact, for the regular matroid $\Rten$, none of the possible generalized star-mesh transformations preserves regularity.
We show that this is the only counterexample, in the following sense. We give a refined version of Seymour's decomposition of regular matroids, implying that for any $3$-connected regular matroid $M$ that is not $\Rten$, either $M$ or its dual has a star-mesh transformation that preserves regularity.
Together with constructions of arithmetic circuits for the $1$-sum and $2$-sum of regular matroids, $\Rten$, and all cographic matroids by dualization, this allows an inductive proof of our main result.
One key contribution of our proof is that we employ Seymour's decomposition of regular matroids without explicitly dealing with the most complicated case, that is, the $3$-sum. 
Instead, we are able to iteratively reduce the rank by $1$ in any $3$-connected regular matroid, see \Cref{prop:removevertex}. 
This inductive procedure to handle $3$-connected regular matroids is in a similar spirit as Berczi et al.~\cite{berczi2024reconfiguration}.
It would be interesting to see whether this strategy can be employed to other problems on regular matroids, for which in many cases the $3$-sum is the hardest case~\cite{artmann2017strongly, artmann2020optimization, aprile2022regular} or remains unsolved, see e.g.~\cite{aprile2025integer}, and extensions of~\cite{nagele2025advances, berczi2024reconfiguration}.

\paragraph{Limitations beyond regular matroids.}

We do not expect that \Cref{main-thm} can be generalized much beyond regular or MFMC matroids. The reason is that efficient circuits for the basis generating polynomial imply a simple algorithm to count the number of bases: simply feed the all-ones vector into the circuit. However, already for binary matroids, which would be the natural next step to generalize \Cref{main-thm}, counting the bases is $\#\textsf{P}$-hard~\cite{knapp2025complexity,snook2012counting}, see also~\cite{anari2018log,anari2019log} for some approximate counting results. Note that this limitation is not specific to the subtraction-free setting, as it equally holds for circuits allowing subtractions.
On a more technical level, our proofs crucially rely on the generalization of the Matrix Tree Theorem to regular matroids~\cite{maurer1976matrix}. Such a generalization is not known beyond regular matroids. 
Unlike \Cref{main-thm}, it may be possible that \Cref{cor:tropical-main} could be generalized to binary matroids, but this would require entirely different proof techniques that are specific to the tropical setting.
A related open problem is whether base polytopes of binary matroids always have polynomial extension complexity~\cite{aprile2022regular}.
A first step in this direction could be to consider proper minor-closed subclasses of binary matroids, which admit strong structural results~\cite{geelen2015highly}.  

\paragraph{Related work.}
The notion of subtraction-free circuits was coined by \citet{fomin2016subtraction}, who also proved that $(+,\times,/)$-circuits of size $O(n^3)$ exist for the spanning tree generating polynomial of a graph with $n$ vertices. This is the special case of \Cref{main-thm} for graphic matroids. \citet{hertrich2024relu} implicitly used this to conclude that ReLU networks of size $O(n^3)$ can compute the value of a minimum spanning tree from the edge weights. They also constructed polynomial-size ReLU networks for the maximum flow problem. ReLU networks as a model of computation were also studied in \cite{hertrich2023provably} for the knapsack problem.
The book by \citet{jukna2023tropical} provides an in-depth treatment of tropical circuits. In particular, \citet{jukna2019greedy} proved an exponential lower bound for $(+,\times)$-circuits computing the spanning tree generating polynomial. This implies that, already in the graphic case, \Cref{main-thm} would fail without division gates. This lower bound also translates to the tropical setting and therefore shows that subtraction gates are necessary in \Cref{cor:tropical-main}.
When disallowing subtractions, the resulting model of \emph{monotone} or the related \emph{input-convex} ReLU networks yield a different model of computation that was investigated in~\cite{hertrich2024neural,bakaev2025depth,brandenburg2024decomposition,gagneux2025convexity}. Such networks imply the existence of (ordinary) extended formulations for the polytopes underlying the related optimization problem~\cite{hertrich2024neural}, while non-monotone networks only yield virtual extended formulations. Connections between circuit complexity and extension complexity also appeared in~\cite{hrubevs2023shadows,fiorini2021strengthening}. It remains an open question whether there is any class of CPWL functions that can be evaluated in polynomial time, but requires exponential-size ReLU networks.

\citet{arora2018understanding} proved that the class of functions computable by ReLU networks exactly coincides with the class of CPWL functions.
\citet{huchette2023deep} surveyed polyhedral methods in deep learning.
It is an open question which depth one needs for such exact representations~\cite{hertrich2023towards,haase2023lower,averkov2025expressiveness,bakaev2025better,valerdi2024minimal,grillo2025depth}. This is in contrast to well-known universal approximation theorems~\cite{cybenko1989approximation,leshno1993multilayer}, which are often restricted to a bounded domain and require very wide neural networks.

Our approach for regular matroids is based on a refinement of Seymour's decomposition of regular matroids~\cite{seymour1980decomposition}. 
Such refinements have been crucial in many other algorithmic and structural applications of this decomposition theorem, see, e.g.,~\cite{dinitz2014matroid, aprile2022regular, berczi2024reconfiguration}.
We emphasize that our refinement naturally extends text-book proofs of Seymour's decomposition and is otherwise self-contained. 

\section{Preliminaries}
\label{sec:prelims}

This section presents the most important preliminaries to understand our main results and the intuition behind the proof techniques. 
Proofs that were omitted from the main text due to space limitations are provided in the appendix.
In addition, \Cref{sec:furtherprelims} contains further preliminaries necessary to understand the details of the omitted proofs.

\subsection{Arithmetic circuits and neural networks}
An \emph{arithmetic circuit} is a directed acyclic graph defining an arithmetic expression using 2-ary operations like $\max$, $\min$, $+$, $-$, $\times$, or $/$.
We assume that each node (or \emph{gate}) of the circuit has either in-degree zero, in which case it is one of $n$ input nodes, or in-degree two, in which case it performs one of the previously mentioned arithmetic computations on the outputs of its two predecessors. Further, we assume that there is a unique output gate with out-degree zero, which defines the arithmetic expression represented by the entire circuit. This expression contains one variable for each of the $n$ input gates and defines a function $f\colon\R^n\to\R$. When talking about specific types of arithmetic circuits, we usually put the allowed operations in parentheses; e.g., a $(+,\times,/)$-circuit is an arithmetic circuit where, besides input gates, there only exist addition, multiplication, and division gates. The \emph{size} of an arithmetic circuit is the number of non-input gates.

In the context of this paper, we consider \emph{neural networks} with \emph{rectified linear unit} (ReLU) activations. 
Like arithmetic circuits, they define a computation through a directed acyclic graph with $n$ input nodes (or \emph{neurons}) and a particular output neuron. In contrast to arithmetic circuits, the neurons can have arbitrary large in-degree (say $k$) and compute a function $(z_1\dots,z_k)\mapsto \max\{0,\sum_{i=1}^k w_i z_i\}$, where the $z_i$ are the outputs of the predecessor neurons and the $w_i$ are the \emph{weights} of the considered neuron. Then, the whole network computes a \emph{continuous and piecewise linear} (CPWL) function $f\colon\R^n\to\R$ defined on the input variables and parameterized by the weights of all the neurons. The \emph{size} of such a neural network is the number of non-input neurons.

We will now make some statements to relate arithmetic circuits, their tropical counterparts, and neural networks. To this end, we define the \emph{tropicalization} of a $(+,\times,/)$-circuit as the corresponding $(\max,+,-)$-circuit, where every $+$-gate is replaced with a $\max$-gate, every $\times$-gate with a $+$-gate, and every $/$-gate with a $-$-gate. 
The function computed by a $(+,\times,/)$-circuit is a \emph{rational function}
\begin{equation}\label{eq:rational_function}
	f(\mathbf z)=\frac{\sum_{\mathbf i \in \mathcal I_1} a_{\mathbf i} \mathbf z^{\mathbf i}}{\sum_{\mathbf i \in \mathcal I_2} b_{\mathbf i} \mathbf z^{\mathbf i}},
\end{equation}
that is, the quotient of two polynomials, where $\mathbf i=(i_1,\dots,i_n)\in\Z_{\geq0}^n$ is a multi-index and $a_{\mathbf i},b_{\mathbf i}\in \Z_{\geq1}$. 
We highlight that, by our definition, a $(+,\times,/)$-circuit does not have constants, and the coefficients $a_{\mathbf i},b_{\mathbf i}\in \Z_{\geq1}$ only arise from accumulated additions. 
In particular, the process of tropicalization would not yield the desired construction if we had negative coefficients.

For a rational function of the form \eqref{eq:rational_function}, we define its \emph{tropicalization} as the CPWL function
\begin{equation}\label{eq:tropical_function}
\trop(f)(\mathbf z)=\max_{\mathbf i \in \mathcal I_1} \mathbf i^\top\mathbf z-\max_{\mathbf i \in \mathcal I_2} \mathbf i^\top\mathbf z.
\end{equation}
See, e.g., \citet[Section~2.2]{joswig2021essentials} for more details on the process of tropicalization. 
Observe that our version of tropicalization does not depend on the values of $a_{\mathbf i}$ and $b_{\mathbf i}$, as long as they are non-zero. This is on purpose to induce the following behavior: $\trop(z) = z = \max\{z,z\} = \trop(z + z) = \trop(2z) $. 
In other words, one might think of this as all non-zero constants being sent to $0$ by the map $\trop$.

With these definitions we can derive the following two propositions, which immediately imply that \Cref{cor:tropical-main} can be deduced from \Cref{main-thm}. See also \cite[Corollary~6.4]{jukna2023tropical} and \cite{hertrich2024relu} for specific versions in the case of spanning trees.

\begin{restatable}{proposition}{circuittropicalization}
\label{prop:circuit-tropicalization}
    If a $(+,\times,/)$-circuit computes a rational function $f$, then the tropicalization of this circuit computes $\trop(f)$.
\end{restatable}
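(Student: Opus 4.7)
The plan is to prove \Cref{prop:circuit-tropicalization} by structural induction on the circuit, showing that for each gate $g$ with associated rational function $f_g = P_g/Q_g$, the corresponding gate in the tropicalized circuit computes $\trop(f_g) = \trop(P_g) - \trop(Q_g)$. The base case is trivial: an input gate $z_i$ computes the rational function $z_i/1$, whose tropicalization $\max_i z_i - 0 = z_i$ agrees with what the input neuron of the tropicalized circuit outputs.

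For the inductive step, I would handle the three gate types in turn. Suppose $g$ takes inputs from gates $g_1, g_2$ with $f_{g_j} = P_j/Q_j$, where by induction all $P_j, Q_j$ have strictly positive integer coefficients. The $\times$-gate case is immediate: $f_g = (P_1 P_2)/(Q_1 Q_2)$, so $\trop(f_g) = \trop(P_1) + \trop(P_2) - \trop(Q_1) - \trop(Q_2)$, which is exactly the value of the corresponding $+$-gate applied to $\trop(f_{g_1})$ and $\trop(f_{g_2})$. The $/$-gate case is analogous: $f_g = (P_1 Q_2)/(Q_1 P_2)$ tropicalizes to $\trop(f_{g_1}) - \trop(f_{g_2})$, matching the $-$-gate. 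In both cases the positivity invariant is preserved, since products of polynomials with positive coefficients have positive coefficients.

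The main obstacle is the $+$-gate case, since it is the only one where terms could in principle combine in a way that alters the tropicalization. Here $f_g = (P_1 Q_2 + P_2 Q_1)/(Q_1 Q_2)$. The crucial point is that because $P_1, P_2, Q_1, Q_2$ all have strictly positive coefficients, the polynomial $P_1 Q_2 + P_2 Q_1$ again has positive coefficients: \emph{no cancellation can occur}. Hence $\trop(P_1 Q_2 + P_2 Q_1)$ is genuinely the maximum over the union of monomial exponents appearing in $P_1 Q_2$ and in $P_2 Q_1$, which equals $\max\bigl(\trop(P_1)+\trop(Q_2),\, \trop(P_2)+\trop(Q_1)\bigr)$. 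Subtracting $\trop(Q_1) + \trop(Q_2) = \trop(Q_1 Q_2)$ from this yields $\max\bigl(\trop(f_{g_1}), \trop(f_{g_2})\bigr)$, which is exactly what the corresponding $\max$-gate computes. This step simultaneously preserves the positive-coefficient invariant needed for subsequent gates.

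Applying the induction up to the output gate completes the proof. I would also remark explicitly in the write-up that the positivity invariant is the reason the paper defines $(+,\times,/)$-circuits without constants and insists on $a_{\mathbf i}, b_{\mathbf i} \in \Z_{\geq 1}$: if negative coefficients were permitted, cancellations in the $+$-case could destroy the identity, and the statement of the proposition would fail.
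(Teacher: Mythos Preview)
Your proposal is correct and follows essentially the same approach as the paper: structural induction on the circuit, with the crucial observation that positivity of all coefficients prevents cancellation in the $+$-case. The only organizational difference is that the paper packages the three gate computations into a standalone lemma asserting that $\trop$ is a semiring homomorphism from subtraction-free rational functions to tropical rational functions (\Cref{lem:semiringhomo}), and then invokes that lemma at the inductive step, whereas you carry out the same computations inline.
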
   

The proof uses the fact that $\trop$ is a semiring homomorphism between functions of the forms \eqref{eq:rational_function} and \eqref{eq:tropical_function} (\Cref{lem:semiringhomo}). This homomorphism is not injective. Therefore, the converse fails, that is, converting $(\max,+,-)$-circuits to $(+,\times,/)$.

\begin{proposition}\label{prop:circuit-to-nn}
    If a CPWL function is computed by a $(\max,+,-)$-circuit of size $s$, then it can also be computed by a ReLU network of size $3s$.
\end{proposition}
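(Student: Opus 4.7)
The plan is to simulate each gate of the given $(\max,+,-)$-circuit by a small gadget of ReLU neurons. The key invariant I would maintain in topological order is that every intermediate value $v$ of the circuit is represented by two ReLU neurons whose outputs are its positive part $v^+ \coloneqq \max\{0, v\}$ and its negative part $v^- \coloneqq \max\{0, -v\}$, so that $v = v^+ - v^-$. Since a ReLU neuron can take an arbitrary affine combination of previous outputs as its preactivation, this decomposition lets possibly negative values be safely propagated through a network in which every neuron applies a ReLU.

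For the inductive step I would add at most three neurons per gate. If $c = a \pm b$, two neurons suffice: each takes $a^+, a^-, b^+, b^-$ with weights $\pm 1$ and applies ReLU, producing $c^+$ and $c^-$ directly. If $c = \max\{a, b\}$, I would use the identity $c = b + \max\{0, a - b\}$ and introduce a helper neuron $d \coloneqq \max\{0, a^+ - a^- - b^+ + b^-\}$, followed by $c^+ = \max\{0, b^+ - b^- + d\}$ and $c^- = \max\{0, -b^+ + b^- - d\}$, for three neurons in total. Summing over all $s$ gates, this gives a network of size at most $3s$, and the value computed by the original circuit is recovered as $f^+ - f^-$ from the two neurons attached to the output gate.

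The constructions are essentially mechanical, and the only place requiring a little care is the boundary: the input variables $z_i$ must also be made available in positive/negative form so that the invariant already holds at the leaves, and the final value has to be read off as an affine combination of the last two neurons rather than as the output of a single ReLU. Both of these can either be absorbed into the first gate consuming each input or charged to an $O(1)$ overhead, so they do not affect the asymptotic $3s$ bound. I expect the main (minor) obstacle to be bookkeeping: one must check that each gadget uses only a single weighted-sum-plus-$\max\{0,\cdot\}$ per neuron, which is clear from the formulas above.
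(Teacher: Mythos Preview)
Your proposal is correct and follows essentially the same approach as the paper: simulate each gate by a constant number of ReLU neurons, with the $\max$ gate being the bottleneck at three neurons. The paper's own proof is a one-line appeal to the standard construction in \cite{arora2018understanding,hertrich2023towards}; your version spells out explicitly the $v=v^+-v^-$ bookkeeping and the gadget $\max\{a,b\}=b+\operatorname{ReLU}(a-b)$, which is exactly the content of those references. One small remark: the proposition claims an exact bound of $3s$, not an asymptotic one, so you should commit to the absorption of the input splitting into the first consuming gate (which works, since input neurons output the raw $z_i$ and can be used directly in any preactivation) rather than speaking of an ``$O(1)$ overhead''.
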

\begin{proof}
    The circuit directly translates to a neural network, where each max-gate can be realized with three ReLU neurons, see~\cite{arora2018understanding,hertrich2023towards}.
\end{proof}

In contrast to our circuit models, neural networks do have constants encoded in their weights. Networks arising through \Cref{prop:circuit-to-nn} have weights in $\{0,\pm1\}$. This is fine for proving upper bounds, but it remains unclear to what extent the use of different constants could lead to more efficient representations.

\subsection{Matroid basics}\label{sec:matroids}
We assume the reader to be familiar with standard matroid terms. 
For a comprehensive overview of regular matroids, we refer to~\cite{oxley2006matroid, truemper1992matroid}.
We lay out some matroid definitions in terms of the bases of matroids, where the definitions are necessary to follow our arguments.

\paragraph{Matroid basics.}
Given a ground set of elements $E$, and a non-empty set of bases $\B\subseteq 2^E$ we say that $M=(E,\B)$ is a \emph{matroid} if for each $B_1,B_2\in\B$ with $B_1\neq B_2$ and every $b\in B_1\setminus B_2$, there is a $b'\in B_2\setminus B_1$ such that $B_1\cup\{b'\}\setminus\{b\}\in\B$.
We set $n:=|E|$ and define the rank as $r:=r(M):=|B|$ for any $B\in\B$.
The rank function extends to $X\subseteq E$, i.e., $r(X):=\max\{|B\cap X|:B\in\B\}$.
We say that a set of elements $I\subseteq E$ is \emph{independent}, if it is a subset of a basis, and denote the set of independent sets by $\I(M)$.
Further, we say that $C\subseteq E$ is a \emph{circuit} of $M$, if $C\notin\I(M)$, but for all $e\in C$, $C-e\in\I(M)$.
Note that a matroid is uniquely determined by its bases, its independent sets, or its circuits.

Given a matroid $M=(E,\B)$, we define the \emph{cobases} as the set of complements of bases $\dual\B:=\{E\setminus B:B\in\B\}$.
Then the dual matroid $\dual M$ of $M$ is defined on the same ground set $E$, with set of bases $\dual\B$.
This directly implies $\dual{(\dual M)}=M$.
We say that $C\subseteq E$ is a cocircuit of $M$ if $C$ is a circuit of $\dual M$.

Given a matroid $M=(E,\B)$ and some element $e\subseteq E$, we define the deletion operation $M\setminus e$ as the matroid on the ground set $E\setminus \{e\}$ with bases $\{B\in\B:e\notin B\}$.
Similarly, we define the contraction operation $M/e$ as the matroid on the ground set $E\setminus\{e\}$ with bases $\{B\setminus\{e\}:B\in\B, e\in B\}$.

\paragraph{Representability.}
Given a field $\mathbb{F}$, we say that a matroid $M=(E,\B)$ of rank $r$ is \emph{$\mathbb{F}$-representable}, if there is a matrix $A\in\mathbb{F}^{r\times n}$, 
such that a subset $E'\subseteq E$ is independent if and only if the corresponding columns are linearly independent over $\mathbb{F}$. 
Note that elementary row operations do not affect the independence of columns.
Therefore, we may assume that any representation matrix of a matroid contains a full-rank identity matrix.
Given a representation matrix of the form $A=(I_r\,C)$ of $M$, where $I_r$ is the identity matrix of size $r$, a representation matrix of $\dual M$ is given by $(C^\intercal\,I_{n-r})$.

\paragraph{Regular matroids.}
Recall that a matrix is \emph{totally unimodular} (TU) if all its subdeterminants are in $\{-1, 0,1\}$ (over $\R$). 
Let $M$ be a binary matroid with representation matrix $A$ over $\binary$. 
We say that $M$ is \emph{regular} if the $1$-entries of $A$ can be signed such that we obtain a real totally unimodular matrix. 
This is equivalent to $M$ being representable over $\R$ by a TU matrix. 

We call a matroid $M=(E,\B)$ \emph{graphic} if there is a connected graph $G=(V,E)$, such that $C\subseteq E$ is a circuit of $M$ if and only if the corresponding edges form a cycle in $G$.
We write $M:=M(G)$ and say that $M(G)$ is the graphic matroid of $G$.
The bases of $M(G)$ map bijectively to the spanning trees of the corresponding graph $G$.
A matroid $M$ is said to be \emph{cographic} if $\dual M$ is graphic.
Graphic and cographic matroids are important examples of regular matroids, but not every regular matroid is of this form, see \Cref{thm:decomp}.

Two important examples of binary matroids that are regular but not graphic or cographic are $\Rten$ and $\Rtwelve$.
They are defined by the representation matrices $(I_5\,A^{10})$ and $(I_6\,A^{12})$ that we list in the appendix (\Cref{rem:MFMC-matrices}).

\paragraph{Encoding matroids.}
The set of bases, independent sets, or circuits of a matroid can be exponentially large in terms of the number of elements, even for regular matroids.
Hence, matroids are often given implicitly in algorithmic applications, via oracles.
The most commonly used type of oracle is the \emph{independence oracle}. 
Let $M=(E,\B)$ be a matroid.
Then, an independence oracle takes a set $X\subseteq E$ as its input and returns \emph{`Yes'} if $X$ is independent, and \emph{`No'} otherwise.
In our running time statements throughout the paper, we assume that matroids are given via independence oracles and that one oracle call takes time $O(1)$.
This suffices to compute representation matrices of binary matroids and TU-representations of regular matroids in polynomial time, see~\cite{oxley2006matroid,camion1964matrices,truemper1992matroid}. We provide more details on this in the appendix, leading to the following statement.

\begin{restatable}{lemma}{fundcocircuit}\label{fund_cocircuit}
    Let $M$ be an $\binary$-representable matroid given via an independence oracle, and let $D\subseteq E(M)$ be a cocircuit of $M$.
    Then we can find a representation matrix $A\in\binary^{r\times n}$ of $M$ such that $D=\mathrm{supp}(A_r)$ in polynomial time. 
\end{restatable}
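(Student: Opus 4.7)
The strategy is to exploit the classical fact that if $M$ has a representation of the form $A = (I_r \mid C)$ over $\binary$ with respect to a basis $B = \{b_1,\dots,b_r\}$ (with $b_i$ indexing the $i$-th row), then the support of the $i$-th row of $A$ is precisely the \emph{fundamental cocircuit} $D^*(b_i, B)$, i.e., the unique cocircuit of $M$ contained in $\{b_i\} \cup (E \setminus B)$. The task therefore reduces to producing a basis $B$ and a distinguished element $e \in B$ with $D^*(e, B) = D$, and then writing down the matrix with $e$ placed in the last row.

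To obtain such a pair $(B, e)$, I would use the fact that the complement $E \setminus D$ of a cocircuit is a hyperplane, hence has rank $r - 1$. First, greedily construct an independent set $B_0 \subseteq E \setminus D$ of size $r-1$ by scanning through the elements of $E \setminus D$ and adding each one that keeps the current set independent (one oracle call per element). Second, pick any $e \in D$ and set $B := B_0 \cup \{e\}$; since $e \notin E \setminus D \supseteq \mathrm{span}(B_0)$, the set $B$ is independent of size $r$, i.e., a basis of $M$. By construction $B \cap D = \{e\}$, so $D \setminus \{e\} \subseteq E \setminus B$ and therefore $D \subseteq \{e\} \cup (E \setminus B)$. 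Since the fundamental cocircuit $D^*(e, B)$ is the \emph{unique} cocircuit contained in this set, we get $D^*(e,B) = D$.

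With $(B, e)$ in hand, order the elements of $E$ so that the basis elements come first with $e$ in position $r$, and write $A = (I_r \mid C) \in \binary^{r \times n}$ by setting $C_{i,j} = 1$ iff $(B \setminus \{b_i\}) \cup \{f_j\} \in \B$, where $f_j$ is the $j$-th non-basis element; each entry needs one oracle call. This is the standard construction of a binary representation via fundamental-circuit queries, and by the row/cocircuit identification above, the support of $A_r$ equals $D^*(e,B) = D$. All steps use polynomially many oracle calls and polynomially many arithmetic operations over $\binary$.

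The main point to be careful about is the correctness of the fundamental-circuit construction: the entries $C_{i,j}$ obtained from basis-exchange tests indeed yield a representation of $M$ over $\binary$ (and not just a matrix with the right supports). I would cite this standard fact from \cite{oxley2006matroid, truemper1992matroid} rather than reproving it; once it is in place, the identification of row supports with fundamental cocircuits is an immediate consequence of dualizing $(I_r \mid C)$ to $(C^\intercal \mid I_{n-r})$ as recalled in \Cref{sec:matroids}.
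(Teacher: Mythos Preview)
Your proof is correct and follows essentially the same approach as the paper: find a basis $B$ of $M$ with $|B\cap D|=1$ and read off the fundamental-circuit representation with respect to $B$, so that the row indexed by the unique element of $B\cap D$ has support exactly the fundamental cocircuit $D$. The only cosmetic difference is that the paper phrases this construction in the dual (extend $D\setminus\{d\}$ to a basis of $\dual M$ and dualize the resulting representation), whereas you stay in $M$ and build $B$ directly from a basis of the hyperplane $E\setminus D$; the two choices of $B$ are complements of one another.
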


\paragraph{Connectivity.}
Let $M$ be a matroid with ground set $E$.
We say that a partition $(X,E\setminus X)$ is a $k$-separation of $M$ if $r(X)+r(E\setminus X)-r(M)\le k-1$.
For $k\ge2$, the matroid $M$ is called \emph{(Tutte) $k$-connected}, if there is no $\ell$-separation with $\ell<k$.
Note that $\dual M$ is $k$-connected if and only $M$ is $k$-connected. 

\paragraph{$1$-, $2$-, and $\Delta$-sums.}
Let $M_1=(E_1,\B_1)$ and $M_2=(E_2,\B_2)$ be matroids.
If $E_1\cap E_2=\emptyset$, then we define the \emph{$1$-sum of $M_1$ and $M_2$} as the matroid $M$ with ground set $E:=E_1\cup E_2$ and set of bases $\B:=\{B_1\cup B_2:B_1\in\B_1,B_2\in\B_2\}$.
We write $M:=M_1\onesum M_2$.
If $E_1\cap E_2=\{e\}$, then we define the \emph{$2$-sum of $M_1$ and $M_2$ at $e$} as the matroid with ground set $E:=E_1\symdiff E_2$ and set of bases $\B:=\{(B_1\cup B_2)-e:B_1\in\B_1,B_2\in\B_2,e\in B_1\symdiff B_2\}$.
We write $M:=M_1\twosum M_2$.

If $M_1$ and $M_2$ are binary matroids, and $E_1\cap E_2=\{d_1,d_2,d_3\}:=D$, such that $D$ is a circuit and does not contain a cocircuit, then we define the \emph{$\Delta$-sum of $M_1$ and $M_2$ at $D$} as the matroid with ground set $E:=E_1\symdiff E_2$ and set of bases $\B$, where $B=(B_1\cup B_2)\setminus D\in\B$ if $B_1\in\B_1$, $B_2\in\B_2$, $B_1\cap B_2=\emptyset$ and either 
\begin{enumerate}[(i)]
    \item $|B_1\cap D|=0$ and $|B_2\cap D|=2$, \emph{or}
    \item $|B_1\cap D|=2$ and $|B_2\cap D|=0$, \emph{or}
    \item $B_1\cap D=\{d_i\}$, $B_2\cap D=\{d_j\}$, $B_1\symdiff(D-d_j)\in\B_1$, and $B_2\symdiff(D-d_i)\in\B_2$.\label{Delta3}
\end{enumerate}
We write $M:=M_1\threesum M_2$.
We remark that while our definition of the $1$-, $2$-, and $\Delta$-sum is non-standard, it is well-known to be equivalent to other definitions, see~\cite[Lemma~10]{aprile2022regular} for the most general case (the $\Delta$-sum) and~\cite{berczi2024reconfiguration}.
Further, note that our definition of $\Delta$-sum (following \citet{truemper1992matroid}) corresponds to the way that the $3$-sum is defined in other contexts~\cite{aprile2022regular, oxley2006matroid, seymour1980decomposition}.

We introduce a variant of the $\Delta$-sum, where we preserve the circuit that is used for identification.
To be precise, let $M_1^+$ be the matroid arising from $M_1$ taking the parallel extension for all elements in $D$. 
Then, we define $M_1\threesum^+ M_2:=M_1^+\threesum M_2$; right from the definition, it follows that the deletion of the copied elements yields again the $\Delta$-sum.

Finally, we state Seymour's decomposition theorem.
\begin{theorem}[{Seymour~\cite{seymour1980decomposition}, see~\cite[Thm.~11.3.14]{truemper1992matroid}}]\label{thm:decomp}
    Every regular matroid $M$ can be decomposed into graphic and cographic matroids and matroids isomorphic to $\Rten$ by repeated $1$-, $2$-, and $\Delta$-sum decompositions.
\end{theorem}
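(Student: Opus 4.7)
The plan is by strong induction on $|E(M)|$, following the standard approach in the textbook treatments of Seymour's theorem. The induction splits into two major parts depending on the connectivity of $M$.

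First, I would handle the case when $M$ is not $3$-connected. If $M$ admits a $1$-separation $(E_1, E_2)$, then $r(E_1) + r(E_2) = r(M)$, and a short argument with the rank function gives $M = M|_{E_1} \onesum M|_{E_2}$; both factors are minors of $M$, hence regular, and each has strictly fewer elements, so the inductive hypothesis applies. If $M$ has a $2$-separation, one uses the well-known correspondence between $2$-separations in binary matroids and $2$-sum decompositions to produce $M = M_1 \twosum M_2$ with both $M_1$ and $M_2$ regular (adding the distinguished element with the appropriate sign preserves total unimodularity) and strictly smaller than $M$.

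The remaining (and interesting) case is when $M$ is $3$-connected. If $M$ is graphic, cographic, or isomorphic to $\Rten$, we are done. Otherwise, the task is to locate a $3$-separation of $M$ and lift it to a $\Delta$-sum decomposition $M = M_1 \threesum M_2$ in which both $M_1$ and $M_2$ are regular and each is strictly smaller than $M$. The typical approach is to find a suitable $3$-connected regular minor of $M$ (notably $\Rtwelve$, which admits a natural $3$-separation along a triangle-cotriangle pair), and then use Seymour's Splitter Theorem to argue that the $3$-separation of this minor can be pulled back to a compatible $3$-separation of $M$. One then checks, via careful manipulation of a totally unimodular representation matrix (or equivalently via Truemper's matrix-based reformulation), that the two sides of the $3$-separation, after adding the identifying triangle, yield regular matroids $M_1$ and $M_2$ satisfying $M = M_1 \threesum M_2$.

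The main obstacle is unambiguously this last step: showing that a $3$-connected regular matroid which is not graphic, cographic, or $\Rten$ must admit such a good $3$-separation. This is the content of Seymour's original theorem and requires both Tutte's excluded-minor characterization of regular matroids (binary, no $F_7$ or $F_7^*$ minor) and an involved case analysis on how $\Rtwelve$ or analogous gadgets sit inside $M$. The other ingredients---the $1$-sum, $2$-sum, and base cases---are comparatively routine once the framework is set up. Since our paper uses Theorem~\ref{thm:decomp} as a black box and our own contribution lies in refining its consequences (see Section~\ref{sec:mfmc} and the rank-reduction argument around \Cref{prop:removevertex}), I would cite Seymour's and Truemper's proofs for this final step rather than reproducing the full case analysis.
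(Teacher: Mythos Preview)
The paper does not prove Theorem~\ref{thm:decomp}; it is stated in the preliminaries with attribution to Seymour~\cite{seymour1980decomposition} and Truemper~\cite[Thm.~11.3.14]{truemper1992matroid} and used throughout as a black box. Your proposal correctly recognizes this in its final paragraph, and the sketch you give is a faithful outline of the standard textbook route (connectivity reduction, then the $\Rtwelve$-based $3$-separation via the Splitter Theorem). There is nothing to compare here: the paper's ``proof'' is simply the citation, which is exactly what you propose doing.
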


\section{Decomposing regular matroids with graphic leafs}\label{sec:decomposing}
The main goal of this section is to prove \Cref{prop:3-connected-child}, the crucial structural insight on regular matroids for our algorithm. 
Together with \Cref{cut_cocircuit} below, we obtain that for any $3$-connected regular matroid $M$ that is not $\Rten$, either $M$ or $\dual M$ contains a cocircuit that `behaves' like the vertex of a graph. 
We remark that \Cref{prop:3-connected-child} has been derived very recently by \citet{berczi2024reconfiguration} to solve two conjectures for regular matroids.
Their proof is based on a global statement for the refined decomposition of regular matroids due to \citet{aprile2022regular}.

\begin{restatable}[{\citet[Proposition~5.7]{berczi2024reconfiguration}}]{proposition}{threeconnectedchild}\label{prop:3-connected-child}
    Let $M$ be a $3$-connected regular matroid, such that $M$ is not graphic, cographic or isomorphic to $\Rten$.
    Then there are $3$-connected regular matroids $M_1$ and $M_2$, such that $|E(M_2)|\ge9$, $M_2$ is graphic and $M_1\threesum M_2\in\{M,\dual M\}$.
\end{restatable}

We give a new short proof of this statement in \Cref{app:decomposing} that is self-contained outside of text-book proofs of Seymour's decomposition~\cite{truemper1992matroid, oxley2006matroid} and relies only on local information of the decomposition, that is, \Cref{lem:inductive-decomposition} and \Cref{dual}.
Our proof builds on \Cref{lem:inductive-decomposition}, an inductive variant of the $3$-connected case of Seymour's decomposition, see~\cite[Lemma~11.3.18]{truemper1992matroid}.
In a nutshell, every $3$-connected regular matroid that is not isomorphic to $\Rten$ can be written as a $\Delta$-sum of a graphic or cographic matroid $M_1$ with a regular matroid $M_2$.
If $M_1$ is graphic, then our proof is done.
Otherwise, we inductively apply \Cref{lem:inductive-decomposition} to the dual matroid.
Note that the $\Delta$-sum $M_1\threesum M_2$ is not stable under dualization, since the dual of a circuit may not be a circuit, that is, the circuit that is used for identification cannot be used anymore.
Instead, the matroids $M_1$ and $M_2$ have to be modified in order to replace this circuit by a cocircuit.
This operation is called a $\Delta$-$Y$-exchange and is more formally introduced in \Cref{app:matroid-basics}.
We complete the proof by analyzing minors of graphs and regular matroids under $\Delta$-$Y$-exchanges.
This analysis guarantees that the number of elements shrinks in each induction step.

The following lemma shows roughly speaking that cocircuits induced by vertex cuts of graphic matroids are preserved as cocircuits under a $\Delta$-sum. More precisely, if $M$ is a graphic matroid, $C$ a cocircuit induced by a vertex cut of $M$, and $D$ a triangle in $M$ such that $C\cap D=\emptyset$, then $C$ is a cocircuit of $M\threesum N$, where $N$ is a regular matroid and the $\Delta$-sum is performed on~$D$.
Together with \Cref{prop:3-connected-child}, this implies that for any $3$-connected regular matroid $M$ that is not $\Rten$, there is a cocircuit in $M$ or $\dual M$ that `locally behaves' like a vertex of a graph.

\begin{restatable}{lemma}{cutcocircuit}\label{cut_cocircuit}
    Let $M_1$ be a regular matroid, $\ell\in\Z_{\ge4}$ and $M:=M_1\threesum^+ M(K_\ell)$, where $V(K_\ell):=\{v_i:i\in[\ell]\}$ and the $\Delta$-sum is performed on the triangle $\{v_1,v_2,v_3\}$ of $K_\ell$.
    Then, $\{e\in E(G):v_\ell\in e\}$ is a cocircuit of $M$.
\end{restatable}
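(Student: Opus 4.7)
The plan is to verify the two defining properties of a cocircuit of $M$: removing $C := \{e \in E(K_\ell) : v_\ell \in e\}$ from the ground set of $M$ decreases the rank by exactly one, and removing any proper subset of $C$ leaves the rank unchanged. Since $\ell \geq 4$, the vertex $v_\ell$ differs from $v_1, v_2, v_3$, so $C$ is disjoint from the gluing triangle $D := \{v_1v_2, v_2v_3, v_1v_3\}$ and therefore $C$ lies in the ground set of $M$.

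The upper bound on the rank of $E(M) \setminus C$ will follow once we show that every basis of $M$ contains at least one edge of $C$. By the basis characterization of the $\Delta$-sum recalled in the preliminaries, every basis of $M$ has the form $(B_1 \cup B_2) \setminus D$ with $B_1 \in \B(M_1^+)$ and $B_2 \in \B(M(K_\ell))$; in each of the three cases (i)--(iii), $B_2$ is a genuine spanning tree of $K_\ell$. Any spanning tree of $K_\ell$ must include at least one edge incident to $v_\ell$, and this edge lies in $C$. Because $C \cap D = \emptyset$, this edge survives the set-difference with $D$ and therefore belongs to the resulting basis of $M$, so $r_M(E(M) \setminus C) \leq r(M) - 1$.

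For the matching lower bound and for minimality, I will construct, for each prescribed $e = v_iv_\ell \in C$, a basis $B_e$ of $M$ with $B_e \cap C = \{e\}$, using case (i) of the $\Delta$-sum. Namely, take any basis $B_1 \in \B(M_1)$, which is also a basis of $M_1^+$ avoiding the gluing triangle, and build a spanning tree $B_2$ of $K_\ell$ with $|B_2 \cap D| = 2$ and $B_2 \cap C = \{e\}$ as follows: include the two triangle edges $v_1v_2$ and $v_2v_3$, add $e$ to attach $v_\ell$, add $v_1v_i$ if $i \geq 4$ to merge the two resulting components, and then attach each remaining vertex $v_j$ (for $j \in \{4,\dots,\ell-1\} \setminus \{i\}$) via the edge $v_1v_j$. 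None of these added edges lies in $C \cup D$, so $B_2$ has the desired intersection profile. The resulting $B_e := B_1 \cup (B_2 \setminus D)$ is a basis of $M$ with $B_e \cap C = \{e\}$; this both establishes minimality of $C$ (a basis hitting $C$ in exactly $\{e\}$ certifies that $(E(M) \setminus C) \cup \{e\}$ spans $M$) and, via $B_e \setminus \{e\}$, exhibits an independent set of size $r(M) - 1$ in $E(M) \setminus C$.

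The main subtlety is that the $\Delta$-sum is a combinatorially intricate operation on matroids, and it is a priori unclear that a cocircuit of the summand $M(K_\ell)$ lifts to a cocircuit of $M$. The argument localizes the question to the $K_\ell$-side via the disjointness $C \cap D = \emptyset$, and the critical observation enabling the upper bound is that in all three basis-construction cases of the $\Delta$-sum the $M(K_\ell)$-contribution $B_2$ is always a full spanning tree of $K_\ell$, so its mandatory $v_\ell$-incident edge remains in the resulting basis of $M$ regardless of how the $M_1^+$-side interacts with $D$.
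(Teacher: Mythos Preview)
Your proof is correct and follows essentially the same approach as the paper's: characterize cocircuits as minimal sets meeting every basis, use the basis description of the $\Delta$-sum to see that every basis of $M$ contains a spanning tree of $K_\ell$ (hence an edge of $C$, which survives since $C\cap D=\emptyset$), and certify minimality by exhibiting for each $e\in C$ a basis of $M$ meeting $C$ only in $\{e\}$ via case~(i). One small wrinkle worth tightening: an arbitrary basis of $M_1$ need not itself avoid $D$, but in $M_1^+$ you can swap any $d_i\in B_1$ for its parallel copy to obtain a basis of $M_1^+$ disjoint from $D$, which is what case~(i) actually requires.
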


\section{Reduction to the 3-connected case}\label{sec:oneandtwosum}

Before we demonstrate how we can use our technical insights on regular matroids to handle the $3$-connected case, in this section, we demonstrate how to deal with $1$- and $2$- sums in our inductive proof of \Cref{main-thm}.

\begin{proposition}\label{prop:solve1sum}
    Let $M_1$ and $M_2$ be matroids and suppose that $\bgp{M_1}$ and $\bgp{M_2}$ can be represented by $(+,\times,/)$-circuits of size $s_1$ and $s_2$, respectively. Then, the basis generating polynomial $\bgp{M}$ of $M=M_1\onesum M_2$ can be represented by a $(+,\times,/)$-circuit of size $s_1+s_2+1$.
\end{proposition}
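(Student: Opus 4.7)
The plan is to show that the basis generating polynomial of a $1$-sum factors as the product of the basis generating polynomials of its summands, so that a single multiplication gate suffices to combine the two given circuits.

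First I would unfold the definitions. Since $E_1 \cap E_2 = \emptyset$, every basis $B \in \mathcal{B}$ of $M = M_1 \onesum M_2$ decomposes uniquely as $B = B_1 \cup B_2$ with $B_1 \in \mathcal{B}_1$ and $B_2 \in \mathcal{B}_2$, and disjointness of the ground sets implies $x^{B_1 \cup B_2} = x^{B_1} \cdot x^{B_2}$ (no variable is shared between the two products). Summing over all such pairs, this gives
\[
    \bgp{M}(x) \;=\; \sum_{B_1 \in \mathcal{B}_1}\sum_{B_2 \in \mathcal{B}_2} x^{B_1} x^{B_2} \;=\; \bgp{M_1}(x) \cdot \bgp{M_2}(x).
\]

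Next I would build the circuit. Take the $(+,\times,/)$-circuit $C_1$ of size $s_1$ computing $\bgp{M_1}$ on input variables $\{x_e : e \in E_1\}$, and the $(+,\times,/)$-circuit $C_2$ of size $s_2$ computing $\bgp{M_2}$ on input variables $\{x_e : e \in E_2\}$. Place them in parallel (they share no input gates since the ground sets are disjoint), and add a single multiplication gate whose two inputs are the output gates of $C_1$ and $C_2$. This new gate outputs $\bgp{M_1} \cdot \bgp{M_2} = \bgp{M}$, the total number of non-input gates is $s_1 + s_2 + 1$, and the resulting DAG is still acyclic.

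There is no real obstacle: the entire argument is the distributive expansion of the product of two polynomials over disjoint variable sets, plus the observation that a $1$-sum corresponds exactly to taking Cartesian products of bases. The only mild subtlety is remembering that the $1$-sum requires $E_1 \cap E_2 = \emptyset$, which is precisely what makes the two monomials multiplicatively independent and thus lets the sum factor.
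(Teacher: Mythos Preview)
Your argument is correct and matches the paper's proof exactly: the paper also observes that $\bgp{M}=\bgp{M_1}\cdot\bgp{M_2}$ by the definition of the $1$-sum and then joins the two circuits with one extra multiplication gate. You have simply written out the factorization step in slightly more detail than the paper does.
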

\begin{proof}
    By the definition of the $1$-sum, we obtain $\bgp{M}=\bgp{M_1}\cdot\bgp{M_2}$. 
    Hence, we can simply combine the two circuits with one additional multiplication gate.
\end{proof}

\begin{restatable}{proposition}{solvetwosum}
\label{prop:solve2sum}
    Let $M_1$ and $M_2$ be matroids with $\{d\}=E(M_1)\cap E(M_2)$. Suppose that $\bgp{M_1}$, $\bgp{M_2\setminus d}$, and $\bgp{M_2/d}$ can be represented by $(+,\times,/)$-circuits of size $s_1$, $s_2^\setminus$, and $s_2^/$, respectively. 
    Then, the basis generating polynomial $\bgp{M}$ of $M=M_1\twosum M_2$ can be represented by a $(+,\times,/)$-circuit of size $s_1+s_2^\setminus + s_2^/ + 2$.
\end{restatable}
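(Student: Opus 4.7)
The plan is to establish the identity
\[
\bgp{M} \;=\; \bgp{M_1/d}\cdot\bgp{M_2\setminus d} + \bgp{M_1\setminus d}\cdot\bgp{M_2/d}
\]
and then combine it with the standard deletion–contraction decomposition
\[
\bgp{M_1}(x_{E_1\setminus d},\,x_d) \;=\; x_d\cdot \bgp{M_1/d}(x_{E_1\setminus d}) + \bgp{M_1\setminus d}(x_{E_1\setminus d})
\]
to avoid ever building separate circuits for $\bgp{M_1/d}$ and $\bgp{M_1\setminus d}$.

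First, I would prove the identity directly from the definition of the $2$-sum. A basis $B$ of $M=M_1\twosum M_2$ has the form $B=(B_1\cup B_2)-d$ with $B_1\in\B_1$, $B_2\in\B_2$ and $d\in B_1\symdiff B_2$. Splitting into the two disjoint cases $d\in B_1$, $d\notin B_2$ and $d\notin B_1$, $d\in B_2$, the first contributes bases of $M_1/d$ paired with bases of $M_2\setminus d$, and the second contributes bases of $M_1\setminus d$ paired with bases of $M_2/d$. Summing $\prod_{e\in B} x_e$ over these two disjoint families gives the claimed product-plus-product identity; this step is purely combinatorial and routine.

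The second step is the circuit construction. Let $C_1$, $C_2^\setminus$, $C_2^/$ denote the given circuits, of sizes $s_1$, $s_2^\setminus$, $s_2^/$. I would introduce a single division gate computing $q \coloneqq \bgp{M_2\setminus d}/\bgp{M_2/d}$ by feeding the outputs of $C_2^\setminus$ and $C_2^/$ into it. Then I feed $q$ into the input slot of $C_1$ corresponding to $x_d$ (leaving the other inputs of $C_1$ unchanged), obtaining a circuit of size $s_1+s_2^\setminus+s_2^/+1$ that computes $\bgp{M_1}(x_{E_1\setminus d},\,q)=q\cdot\bgp{M_1/d}+\bgp{M_1\setminus d}$. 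One final multiplication gate multiplies this output by $\bgp{M_2/d}$, yielding
\[
q\cdot\bgp{M_1/d}\cdot\bgp{M_2/d}+\bgp{M_1\setminus d}\cdot\bgp{M_2/d} \;=\; \bgp{M_1/d}\cdot\bgp{M_2\setminus d}+\bgp{M_1\setminus d}\cdot\bgp{M_2/d} \;=\; \bgp{M},
\]
exactly by the identity from the first step. The division by $\bgp{M_2/d}$ is cancelled by the subsequent multiplication, so the overall object is still a rational function with positive integer coefficients, and no further gates are required.

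Counting gives the claimed bound $s_1+s_2^\setminus+s_2^/+2$: the three subcircuits contribute their respective sizes, plus one division gate for $q$ and one multiplication gate at the top. I do not expect any serious obstacle here: the combinatorial identity is a direct unfolding of the $2$-sum definition, and the circuit-level manipulation is the same "plug a rational substitute into the $x_d$-input, then clear the denominator" trick that underlies the graphic case of \citet{fomin2016subtraction}. The only mild subtlety worth flagging is that, by our convention, a $(+,\times,/)$-circuit has no constant inputs, so one must check that the substitution $x_d\mapsto q$ in $C_1$ is literally realized by rerouting the wires that originated at the input gate $x_d$ to the output of the division gate, which is permissible because $C_1$ remains a DAG.
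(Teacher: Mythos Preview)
Your proposal is correct and follows essentially the same approach as the paper: both arrive at the identity
\[
\bgp{M}(x)=\bgp{M_1}\!\left(x_{E(M_1)-d},\,\frac{\bgp{M_2\setminus d}}{\bgp{M_2/d}}\right)\cdot \bgp{M_2/d},
\]
realize it with one extra division and one extra multiplication gate, and verify it by splitting the bases of $M$ according to which side of the symmetric difference contains~$d$. The only cosmetic difference is that you first isolate the bilinear identity $\bgp{M}=\bgp{M_1/d}\,\bgp{M_2\setminus d}+\bgp{M_1\setminus d}\,\bgp{M_2/d}$ and then invoke deletion--contraction on $M_1$, whereas the paper expands both sides of the substitution formula directly.
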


\Cref{prop:solve2sum} can be proved by grouping the bases of $M$ into two parts, depending on which side contains the gluing element $d$. This allows to write $f_M$ in terms of $f_{M_1}$, $f_{M_2\setminus d}$, $f_{M_2/ d}$, and two additional arithmetic operations.

\section{Handling the 3-connected case}\label{sec:3-connected}
\begin{proposition}\label{prop:removegraphic}
    Let $M=M_1\threesum M_2$ with $M_1$ being a regular matroid and $M_2$ being a 3-connected graphic matroid with $n_2\coloneqq|E(M_2)|\ge9$. Assume $f_{M_1}$ can be represented by a $(+,\times,/)$-circuit of size $s_1$. Then, $f_M$ can be represented by a $(+,\times,/)$-circuit of size $s_1+n_2^3/2$. Moreover, this circuit can be constructed in polynomial time using an independence oracle of $M$.
\end{proposition}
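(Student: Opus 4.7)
The plan is to prove \Cref{prop:removegraphic} by an induction on $|V(H)|$, where $H$ is the graph (unique up to Whitney $2$-isomorphism) with $M(H)=M_2$, eliminating vertices of $H$ not on the gluing triangle $D$ via star-mesh transformations that manifest both as combinatorial operations on $H$ and as the generalized star-mesh transformation of \Cref{gen-star-mesh} on $M$. First I would reconstruct $H$ and the triangle $D$ in polynomial time from the independence oracle using graphic matroid recognition, and let $V_D\subseteq V(H)$ denote the at most three vertices of $H$ incident to $D$. Since $M_2$ is $3$-connected we have that $H$ is simple and $3$-connected with $|V(H)|\ge 4$, so $V(H)\setminus V_D$ is always nonempty.

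For the inductive step I would pick $v\in V(H)\setminus V_D$ of current degree $d$, with incident edges carrying variables $x_1,\ldots,x_d$, and append to the partial circuit the $d-1$ additions computing $s_v:=x_1+\cdots+x_d$ followed by, for each pair $i<j$, two gates computing the mesh variable $y_{ij}:=x_i x_j / s_v$. By \Cref{cut_cocircuit}, the edges at $v$ form a cocircuit $C_v$ of $M_1\threesum^+ M(H)$ that is disjoint from $D$ and hence also a cocircuit of $M_1\threesum M(H)=M$. The key technical claim is that applying the generalized star-mesh transformation (\Cref{gen-star-mesh}) to $M$ at $C_v$ produces exactly the matroid $M_1\threesum M(H')$, where $H'$ is the combinatorial star-mesh of $H$ at $v$. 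Given this identification, the polynomial identity
\[
  f_{M_1\threesum M(H)}(x) \;=\; s_v\cdot f_{M_1\threesum M(H')}(y)
\]
lets me invoke the inductive hypothesis on the strictly smaller instance $M_1\threesum M(H')$.

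The recursion terminates once $H$ has collapsed to a multi-graph on $V_D$ consisting of parallel copies of the three triangle edges; a short case analysis of the $\threesum$ definition will then express $f_M$ as $f_{M_1}$ evaluated at certain polynomial combinations of the accumulated mesh variables, contributing only $O(n_2)$ gates on top of the given size-$s_1$ circuit for $f_{M_1}$. For the size bound, one elimination at a vertex of current degree $d$ costs $d^2-1$ gates; combined with $|V(H)|\le 2n_2/3$ (which holds initially by $3$-connectedness and is preserved since the vertex count only decreases), summing over all at most $|V(H)|-3$ iterations fits well within the advertised $n_2^3/2$ budget, and polynomial-time constructibility is inherited step by step. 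The main obstacle I foresee is the matroidal identification between the generalized star-mesh on $M$ at $C_v$ and the combinatorial star-mesh on $H$; I would address this by working with a TU representation of $M$ whose rows on the $M(H)$-block are the vertex-edge incidence vectors of $H$, so that the Schur-complement-like operation underlying \Cref{gen-star-mesh} acts on the row for $v$ exactly as the classical star-mesh acts on the vertex $v$ in $H$ and leaves the $M_1$-block untouched because $C_v\cap D=\emptyset$.
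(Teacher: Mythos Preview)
Your overall strategy—peel off one vertex of the graphic part at a time via a star-mesh transformation and recurse—matches the paper's. The substantive difference is that the paper first \emph{completes} $H$ to the complete graph $K_\ell$ on $|V(H)|$ vertices (by setting the missing edge variables to~$0$) and works with $M_1\threesum^+ M(K_\ell)$ throughout, whereas you iterate directly on $H$. This single trick buys the paper two things you are missing.

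First, your central claim that the generalized star-mesh on a TU representation $A$ of $M$ at the cocircuit $C_v$ yields a TU representation of $M_1\threesum M(H')$ is not justified. Your proposed fix—a block TU representation whose $M(H)$-rows are literal vertex-edge incidence vectors—requires unpacking the $3$-sum matrix construction carefully (the connecting block is not zero, and one must check that row~$v$ of that block vanishes because $v\notin V_D$, and then that the resulting $A'$ is again of $3$-sum shape and hence TU). None of this is in the paper, and it is not a one-liner. The paper sidesteps the issue entirely: in $K_\ell$ every pair $v u_i,v u_j$ completes to a triangle with the existing edge $u_iu_j$, so the circuit $\{vu_i,vu_j,u_iu_j\}$ in $M$ forces the new star-mesh column to be (up to sign) \emph{parallel} to the existing column for $u_iu_j$. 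Hence $A'$ is, after discarding parallels, a column submatrix of $A$ and therefore automatically TU and a representation of $M_1\threesum^+ M(K_{\ell-1})$. Your route may well be made to work, but the paper's is shorter and avoids the $3$-sum matrix analysis altogether.

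Second, your size bound is incomplete. If you do not merge parallel edges after each star-mesh, the degree $d$ of the next eliminated vertex can grow (each surviving neighbour of $v$ gains $d-2$ multi-edges), and $\sum d_v^2$ is no longer controlled by $|V(H)|^3$; your sentence ``$|V(H)|\le 2n_2/3$ \ldots\ is preserved since the vertex count only decreases'' does not bound the degrees. Completing to $K_\ell$ again fixes this for free: one step takes $K_\ell$ to $K_{\ell-1}$, the degrees stay equal to $\ell-2$, and the paper's count $(\ell-2)+3\binom{\ell-1}{2}+1$ per step telescopes cleanly to $\le \ell^3/2\le n_2^3/2$, using only $n_2\ge \ell$ from $3$-connectedness. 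Your base case (a multigraph on $V_D$) and its handling via the $\threesum$ definition also become unnecessary once you observe that $M_1\threesum^+ M(K_3)\cong M_1$, which gives the induction start directly.
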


Once we have \Cref{prop:removegraphic}, we can prove \Cref{main-thm} by induction. We need one little additional helpful lemma to control dualization.

\begin{lemma}\label{lem:dual}
    Let $M$ be a matroid on $n$ elements and assume $f_M$ can be represented by a $(+,\times,/)$-circuit of size $s$. Then, $f_{M^*}$ can be represented by a $(+,\times,/)$-circuit of size $s+2n$.
\end{lemma}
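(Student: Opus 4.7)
The plan is to exploit the straightforward bijection between bases of $M$ and bases of $M^*$ via complementation, which yields the algebraic identity
\begin{equation*}
    f_{M^*}(x_1,\ldots,x_n) = \sum_{B\in\B(M)}\prod_{e\notin B}x_e = \Bigl(\prod_{e\in E}x_e\Bigr)\cdot f_M\bigl(1/x_1,\ldots,1/x_n\bigr).
\end{equation*}
This holds because, setting $y_e \coloneqq 1/x_e$, each monomial $\prod_{e\in B}y_e$ becomes $\prod_{e\notin B}x_e$ after multiplication by $\prod_{e\in E}x_e$, and the map $B\mapsto E\setminus B$ is a bijection from $\B(M)$ to $\B(M^*)$ by the definition of the dual matroid given in \Cref{sec:matroids}.

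From this identity, the construction is essentially mechanical. First I would introduce $n$ division gates, one per input $x_e$, to produce the reciprocals $y_e = 1/x_e$; this uses $n$ gates. Then I would feed these reciprocals into the assumed $(+,\times,/)$-circuit for $f_M$, contributing $s$ additional gates and producing $f_M(y_1,\ldots,y_n)$ at its output. Finally I would build the monomial $\prod_{e\in E}x_e$ from the original inputs using $n-1$ multiplication gates, and then perform one final multiplication of this monomial with the output of the $f_M$-circuit. The total gate count is $n + s + (n-1) + 1 = s + 2n$, as required.

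There is no real obstacle here: the only thing to double-check is that this is a legal $(+,\times,/)$-circuit in the sense of \Cref{sec:prelims}, that is, that no constants or subtractions are introduced. Indeed, the reciprocals are produced purely by division gates applied to inputs, the interior gates come from the circuit for $f_M$ by assumption, and the rest are multiplications; no constants are needed at any stage. Since $f_M$ and the monomial $\prod_{e\in E} x_e$ are both polynomials with positive integer coefficients, the function computed at every intermediate gate is a nonzero rational function (so no illegal division by zero arises as a formal expression), and the final output is indeed the polynomial $f_{M^*}$.
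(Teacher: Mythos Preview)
Your proof is correct and essentially identical to the paper's: both invoke the identity $f_{M^*}(x)=x^{E}\cdot f_M\bigl((1/x_e)_{e\in E}\bigr)$ and count $n$ divisions for the reciprocals together with $n$ multiplications for $x^{E}$ and the final product. (A pedantic reader might note that producing $1/x_e$ with a single binary gate and no constants is not literally possible in the stated model---one extra gate such as $x_1/x_1=1$ would be needed---but the paper's own proof glosses over the very same point, and the discrepancy is immaterial for the $O(n^3)$ bound.)
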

\begin{proof}
    Observe that $
        f_{M^*}(x)=x^E\cdot f_{M}((1/x_e)_{e\in E})$.
    This expression involves $n$ additional multiplications and $n$ additional divisions compared to~$f_M$, implying the statement.
\end{proof}

\begin{proof}[Proof sketch of \Cref{main-thm}]
    We prove by induction on the number $n$ of elements of the matroid $M$ that $f_M$ can be represented by a $(+,\times,/)$-circuit of size $g(n)\coloneqq n^3$.
    If $M$ is graphic, the statement was proven by \citet{fomin2016subtraction}.
    For the cographic case, we additionally apply \Cref{lem:dual} to reduce to the graphic case.
    Finally, $\Rten$ has rank $5$ and a constant number of bases, so a naive implementation yields a constant size circuit.
    If $M$ is a $1$-sum or $2$-sum, we apply \Cref{prop:solve1sum} and \Cref{prop:solve2sum} to recursively obtain a circuit of size $O(n^3)$.

    If none of the cases considered so far applies, then the assumption of \Cref{prop:3-connected-child} is fulfilled. Thus, we can write either $M$ or $M^*$ as $M_1\threesum M_2$ with $M_1$ being a regular matroid and $M_2$ being a 3-connected graphic matroid with $n_2\coloneqq|E(M_2)|\ge9$.
    By induction, \Cref{prop:removegraphic}, and \Cref{lem:dual}, we again obtain a circuit of size $O(n^3)$.
    We remark that all steps are constructive and can be performed by a standard implementation in polynomial time.
\end{proof}

\subsection{A generalized star-mesh transformation}

To prove \Cref{prop:removegraphic}, we first focus on the case that $M_2$ is the matroid corresponding to the complete graph. We show that we can ``remove one vertex'' from the complete graph by performing an operation known as the \emph{star-mesh transformation}. The important observation is that we can do this even though $M$ is not graphic globally, by exploiting that $M_2$ is graphic.

\begin{restatable}{proposition}{removevertex}
\label{prop:removevertex}
    Let $M_1$ be a regular matroid and $\ell\in\Z_{\ge4}$. 
    Let $M\coloneqq M_1\threesum^+ M(K_\ell)$ and $M'\coloneqq M_1\threesum^+ M(K_{\ell-1})$, where the respective $\Delta$-sum is performed on the triangle $\{v_1v_2,v_1v_3,v_2v_3\}$. 
    Let $z_{ij}$ be the variable of $f_M(z)$ corresponding to the edge $v_iv_j$ of $K_\ell$ and let $z'_{ij}$ be the variable of $f_{M'}(z')$ corresponding to the edge $v_iv_j$ of $K_{\ell-1}$. 
    Then, setting $z'_{ij}\coloneqq z_{ij}+(z_{i\ell}z_{j\ell})/y$ for $1\leq i < j\leq \ell-1$ with $y\coloneqq\sum_{k\in[\ell-1]}z_{k\ell}$, we have that $f_M(z)=yf_{M'}(z')$.
\end{restatable}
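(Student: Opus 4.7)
The plan is to apply Maurer's matrix-tree theorem to both $M$ and $M'$ and relate the two determinants via a Schur complement along the row corresponding to the cocircuit at~$v_\ell$.

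By \Cref{cut_cocircuit}, the star $S:=\{v_iv_\ell:i\in[\ell-1]\}$ is a cocircuit of $M$, and since $\ell\ge 4$ it is disjoint from the identification triangle $D=\{v_1v_2,v_1v_3,v_2v_3\}$. Using \Cref{fund_cocircuit}, I would select a TU representation $A\in\{0,\pm1\}^{r\times n}$ of $M$, with $r=r(M)$, whose last row $a_r$ is supported exactly on $S$; after rescaling columns by $\pm1$ this row can be made $+1$ on $S$. Because $S$ consists of non-triangle elements on the $M(K_\ell)$-side of the $\Delta$-sum, the block structure of TU representations of $\Delta$-sums implies that the columns of $A$ indexed by $S$ vanish on the rows originating from the $M_1$-side, and within the graphic rows they form exactly the portion of the signed incidence matrix of $K_\ell$ around $v_\ell$. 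Since $S\cap D=\emptyset$, we moreover have $M\setminus S=M'$, so deleting row~$r$ together with the columns of $S$ from $A$ yields a TU representation $A'\in\{0,\pm1\}^{(r-1)\times(n-\ell+1)}$ of $M'$.

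By Maurer's matrix-tree theorem~\cite{maurer1976matrix}, $f_M(z)=\det(L)$ for $L:=A\operatorname{diag}(z)A^\top$; direct computation gives $L_{rr}=y$ and $L_{rj}=\sum_{i\in[\ell-1]}A_{j,v_iv_\ell}z_{i\ell}$ for $j<r$, which vanishes whenever row~$j$ lies outside the graphic block. The Schur complement identity then yields
\begin{equation*}
f_M(z)=y\cdot\det\!\bigl(L_{[r-1],[r-1]}-y^{-1}L_{[r-1],r}L_{r,[r-1]}\bigr).
\end{equation*}
The key remaining step is to identify this perturbed $(r-1)\times(r-1)$ matrix with $A'\operatorname{diag}(z')A'^\top$ for the substitution $z'_{ij}=z_{ij}+z_{i\ell}z_{j\ell}/y$ together with $z'_e=z_e$ for all other elements. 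Writing the rank-one correction entry-by-entry and using that the $S$-columns of $A$ are signed indicators of the pairs $\{v_i,v_\ell\}$ in the graphic block, the correction contributes the scalar $z_{i\ell}z_{j\ell}/y$ to the $(v_i,v_j)$-entry with exactly the same sign pattern that a fresh signed-incidence column for a virtual edge $v_iv_j$ would contribute. Combined with the existing contribution of the original edge $v_iv_j$ of weight $z_{ij}$, this produces the entry $-z'_{ij}$. Applying Maurer's theorem in reverse to $A'$ then gives $f_M(z)=y\cdot f_{M'}(z')$.

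The main obstacle lies in the first step: securing a TU representation of $M$ whose $S$-columns restrict to the signed incidence matrix of $K_\ell$ at $v_\ell$ and vanish on the $M_1$-rows. This is the one place where the specific block structure of the $\Delta$-sum (together with $S\cap D=\emptyset$) is essential; everything else is a mechanical matrix computation. If such a normalized representation is not immediately provided by \Cref{fund_cocircuit}, it can be assembled by standard pivoting starting from the canonical TU representations of $M_1$ and $M(K_\ell)$ that go into building the $\Delta$-sum.
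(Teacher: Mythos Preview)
Your overall strategy coincides with the paper's: compute $f_M=\det(L)$ via Maurer's theorem with a TU representation~$A$ whose last row is supported on the star~$S$, take the Schur complement with respect to that row, and identify the result with the Laplacian of~$M'$. The Schur-complement step is exactly \Cref{matrix-star-mesh}, so up to that point the two arguments are the same.

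The difference---and the gap in your proposal---lies in how you justify the identification of the Schur complement with $A'\operatorname{diag}(z')A'^\top$. You appeal to a ``block structure'' of~$A$ in which the $S$-columns vanish on the $M_1$-rows and look like a signed incidence matrix on the remaining rows, and you then speak of the ``$(v_i,v_j)$-entry'' of the resulting matrix. But the rows of~$A$ are not indexed by vertices, and \Cref{fund_cocircuit} does not hand you such a block form. More importantly, even if you impose a block structure, the triangle edges $v_1v_2,v_1v_3,v_2v_3$ live on the $M_1^+$-side (as the parallel copies that survive in~$\threesum^+$), so their columns will \emph{not} be confined to the graphic block. This breaks the entry-by-entry match you sketch precisely for those three pairs $(i,j)\in\{1,2,3\}^2$.

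The paper sidesteps all of this with a single clean observation that works for \emph{any} TU representation with $\operatorname{supp}(A_r)=S$: for every $i<j$ in $[\ell-1]$, the triple $\{v_iv_j,v_iv_\ell,v_jv_\ell\}$ is a circuit of~$M$. Over a TU matrix this forces, after normalizing $A_r$ to~$+1$ on~$S$, that $A_{\cdot,v_iv_\ell}-A_{\cdot,v_jv_\ell}=\pm A_{\cdot,v_iv_j}$. Hence the new columns produced by the star--mesh transformation (\Cref{gen-star-mesh}) are parallel to the existing $v_iv_j$-columns, and merging them yields exactly the substitution $z'_{ij}=z_{ij}+z_{i\ell}z_{j\ell}/y$. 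No block decomposition, no vertex indexing, and no special treatment of the triangle edges is required. If you replace your block-structure paragraph by this circuit argument, your proof becomes complete and coincides with the paper's.
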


\begin{proof}[Proof of \Cref{prop:removegraphic}]
    Observe that in order to bound the complexity of $f_M$ for $M_1\threesum\!M_2$, we can instead bound the complexity of $f_{M^+}$ with $M^+=M_1\threesum^+ M_2$, as we obtain $f_M$ by just plugging in $0$ for the three additional variables.
    
    First, assume that $M_2=M(K_\ell)$. We show by induction on $\ell\geq3$ that $f_{M^+}$ and thus $f_M$ can be represented with size $s_1 + \ell^3/2$. 
    For $\ell=3$, observe that $M_1 \threesum^+ M(K_3)$ is isomorphic to~$M_1$. Thus, the induction start follows. Now assume that $\ell>3$. We apply \Cref{prop:removevertex} once to $M^+$. For calculating $y$ we need $\ell-2$ addition gates, for calculating all the $z'$-variables, we need three gates each, amounting to $3(\ell-1)(\ell-2)/2$ many gates, and then we need one further gate to multiply $y$ with $f_{M'}(z')$. Thus, by induction, we can compute $f_M$ with 
    \[
    s_1 + (\ell-1)^3/2 + (\ell-2) + 3(\ell-1)(\ell-2)/2 + 1 \leq s_1 + \ell^3/2 
    \]
    many gates, finishing the induction.

    Now we complete the proof by considering $M_2=M(G)$ for an arbitrary graph~$G$ with $\ell$ many vertices and $n_2$ many edges. By simply plugging in $0$ for all missing edges, we can complete $G$ to the complete graph $K_\ell$. As $M_2$ is 3-connected, we have $n_2\geq \ell$. Thus, the statement follows.
    Clearly, the above construction can be performed in polynomial time.
    Further, we can also efficiently find the relevant representation matrices by \Cref{fund_cocircuit}.
\end{proof}

To derive the representation arising from the star-mesh transformation, we use the following generalization of Kirchhoffs celebrated matrix-tree theorem on the number of spanning trees of a connected graph~\cite{kirchhoff1847ueber}.
\begin{theorem}[{\citet[Thm.~3]{maurer1976matrix}}]\label{thm:matrix-tree}
    Let $M$ be a regular matroid and let $A\in\{0,\pm1\}^{r\times n}$ be a totally unimodular matrix representing $M$.
    Let $X:=\mathrm{diag}(x_e:e\in E(M))$ and $L:=AXA^\intercal$.
    Then, $f_M=\mathrm{det}(L)$.
\end{theorem}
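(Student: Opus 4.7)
The plan is to apply the Cauchy--Binet formula to the product $L=A\cdot(XA^\intercal)$ and read off the basis generating polynomial from the resulting sum, using total unimodularity to rule out coefficients other than $0$ and $1$. Since $A$ has $r$ rows and $n\geq r$ columns, Cauchy--Binet gives
\begin{equation*}
\det(L)=\sum_{S\in\binom{[n]}{r}}\det(A_S)\cdot\det\bigl((XA^\intercal)_S\bigr),
\end{equation*}
where $A_S$ denotes the $r\times r$ submatrix of $A$ with columns indexed by $S$ and $(XA^\intercal)_S$ the $r\times r$ submatrix of $XA^\intercal$ with rows indexed by $S$.

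Next, I would simplify the second factor. Because $X$ is diagonal with entries $x_e$, left-multiplication by $X$ rescales the $e$-th row of $A^\intercal$ by $x_e$. Hence the $r\times r$ matrix $(XA^\intercal)_S$ is obtained from $(A^\intercal)_S=(A_S)^\intercal$ by scaling its $i$-th row by $x_{e_i}$ for each $e_i\in S$, so
\begin{equation*}
\det\bigl((XA^\intercal)_S\bigr)=\Bigl(\prod_{e\in S}x_e\Bigr)\cdot\det\bigl((A_S)^\intercal\bigr)=\Bigl(\prod_{e\in S}x_e\Bigr)\cdot\det(A_S).
\end{equation*}
Plugging this back in collapses the Cauchy--Binet sum to
\begin{equation*}
\det(L)=\sum_{S\in\binom{[n]}{r}}\det(A_S)^2\cdot\prod_{e\in S}x_e.
\end{equation*}

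The final step is where regularity enters. Since $A$ is totally unimodular, $\det(A_S)\in\{-1,0,1\}$ for every $S$, so $\det(A_S)^2\in\{0,1\}$. Moreover, $\det(A_S)^2=1$ if and only if $A_S$ is nonsingular, which by the definition of representability is equivalent to $S$ being a basis of $M$. Therefore the above sum restricts to a sum over $\B$ with all coefficients equal to $1$, yielding $\det(L)=\sum_{B\in\B}\prod_{e\in B}x_e=\bgp{M}$, as required. The argument is essentially routine once Cauchy--Binet is invoked; the only subtle point---and the one place where the hypothesis is genuinely used---is the total unimodularity of $A$, which is precisely what prevents repeated bases from appearing with multiplicities greater than one in the expansion.
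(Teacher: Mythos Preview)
Your argument is correct and is in fact the standard proof of this result via Cauchy--Binet. Note, however, that the paper does not give its own proof of this theorem: it is stated as a known result and attributed to \citet[Thm.~3]{maurer1976matrix}. So there is no in-paper proof to compare against; your proposal simply supplies the classical argument that the paper takes for granted.
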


\begin{definition}\label{gen-star-mesh}
    Let $A\in\{0,\pm1\}^{r\times n}$ be a matrix.
    Let $N_1:=\mathrm{supp}(A_r)$ and $N_0:=[n]\setminus N_1$, where $A_r$ denotes the $r$-th row of $A$.
    We define a matrix $A'\in\R^{(r-1)\times n'}$, where $n':=|N_0|+\binom{|N_1|}{2}$ as the matrix obtained by performing a \emph{star-mesh transformation on $A$ with respect to $r$}.
    Specifically, we index the columns of $A'$ by $N_0\cup\binom{N_1}{2}$ and define $A'_{i,j}:=A_{i,j}$ if $i\in[r-1]$, $j\in N_0$ and $A'_{i,(j,k)}:=A_{i,j}-(A_{r,j}A_{r,k}A_{i,k})$ for $i\in[r-1]$ and $j,k\in N_1$ with $j<k$.
\end{definition}

We note that our definition of star-mesh transformation for matrices generalizes the well-known star-mesh transformation for graphs (see for instance~\cite{fomin2016subtraction}) in the following sense:
If $G$ is a graph and $A\in\{0,\pm1\}^{V(G)\times E(G)}$ is the vertex-edge incidence matrix of $G$, where each edge is arbitrarily directed, then $A'$ is the (directed) vertex-edge incidence matrix of a graph $G'$, where some vertex $v\in V(G)$ has been removed along with its incident edges, and has been replaced by a complete graph on the neighborhood of $v$.
In this sense, the star-mesh transformation generalizes the notion of $Y$-$\Delta$-exchanges, with equivalence for vertices of degree $3$.

Our final ingredient to prove \Cref{prop:removevertex} is that our version of the star-mesh transformation can actually be applied to reduce the number of rows of the matrix $L$ in \Cref{thm:matrix-tree} by one. This is captured by the following proposition.

\begin{restatable}{proposition}{matrixstarmesh}\label{matrix-star-mesh}
    Let $A\in\{0,\pm1\}^{r\times n}$ be a matrix and let $X=\mathrm{diag}(z_i:i\in[n])$ be a diagonal matrix of rational functions. 
    Further, let $N_1:=\mathrm{supp}(A_r)$ and $N_0:=[n]\setminus N_1$.
    Then, $\mathrm{det}(AXA^\intercal)=y\cdot\mathrm{det}(A'X'A'^\intercal)$, where $y:=\sum_{i\in N_1}z_i$, the matrix $A'\in\R^{(r-1)\times n'}$ is obtained by performing a star-mesh transformation on $A$ with respect to $r$, and $X'\in\ratfun^{n'\times n'}$ is a diagonal matrix of rational functions such that $X'_{i,i}:=z_i$ for $i\in N_0$ and $X'_{(\alpha,\beta),(\alpha,\beta)}:=z_\alpha z_\beta /y$ for $\alpha,\beta\in N_1$ with $\alpha<\beta$.
\end{restatable}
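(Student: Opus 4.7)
The plan is to apply the Schur complement formula by eliminating the last row and column of $L=AXA^\intercal$, and then verify the resulting matrix identity entrywise. A direct computation first gives
\[
L_{rr}=\sum_{k=1}^n A_{rk}^2\, z_k = \sum_{k\in N_1} z_k = y,
\]
since $A_{rk}\in\{0,\pm 1\}$ is nonzero precisely on $N_1$. Letting $L'\in\ratfun^{(r-1)\times(r-1)}$ denote the top-left principal block of $L$ and $c\in\ratfun^{r-1}$ its truncated last column (so $c_i=\sum_{k\in N_1} A_{ik}A_{rk}z_k$), and working over $\ratfun$ where $y\neq 0$, the Schur complement formula yields $\det(L)=y\cdot \det(L'-cc^\intercal/y)$. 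The proposition therefore reduces to proving the matrix identity $L'-cc^\intercal/y = A'X'A'^\intercal$.

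The next step is entrywise verification. For $i,j\in[r-1]$, the columns of $A'$ indexed by $N_0$ coincide with those of $A$, so both sides agree on the $k\in N_0$ contribution to $(L')_{ij}$. It therefore remains to show
\[
\frac{1}{y}\sum_{\substack{\alpha<\beta \\ \alpha,\beta\in N_1}} (A_{i\alpha}-A_{r\alpha}A_{r\beta}A_{i\beta})(A_{j\alpha}-A_{r\alpha}A_{r\beta}A_{j\beta})\, z_\alpha z_\beta \;=\; \sum_{k\in N_1} A_{ik}A_{jk}z_k - \frac{c_ic_j}{y}.
\]

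The core algebraic step is to expand the product on the left and resum over ordered pairs with $\alpha\neq\beta$. The two facts that drive the computation are $A_{r\alpha}^2 = 1$ for $\alpha\in N_1$, which simultaneously simplifies the $A_{r\alpha}^2 A_{r\beta}^2$ cross terms (making the expansion symmetric in $\alpha,\beta$) and causes the artificial $\alpha=\beta$ diagonal corrections introduced by passing to ordered pairs to cancel, together with the elementary identity $\sum_{\beta\in N_1,\,\beta\neq\alpha} z_\beta = y-z_\alpha$. Once the product is expanded as $A_{i\alpha}A_{j\alpha}+A_{i\beta}A_{j\beta}-A_{r\alpha}A_{r\beta}(A_{i\alpha}A_{j\beta}+A_{i\beta}A_{j\alpha})$, the ``diagonal'' pieces telescope via the above identity into $y\cdot \sum_{\alpha\in N_1} A_{i\alpha}A_{j\alpha}z_\alpha - \sum_\alpha A_{i\alpha}A_{j\alpha}z_\alpha^2$, while the ``off-diagonal'' pieces recombine into $\bigl(\sum_{\alpha\in N_1} A_{i\alpha}A_{r\alpha}z_\alpha\bigr)\bigl(\sum_{\beta\in N_1} A_{j\beta}A_{r\beta}z_\beta\bigr) - \sum_\alpha A_{i\alpha}A_{j\alpha}z_\alpha^2 = c_ic_j - \sum_\alpha A_{i\alpha}A_{j\alpha}z_\alpha^2$. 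The two $z_\alpha^2$ corrections cancel, and dividing by $y$ yields precisely the required right-hand side; the Schur complement formula then finishes the proof. The only subtle point is that the definition of $A'_{i,(j,k)}$ in \Cref{gen-star-mesh} is asymmetric in $j$ and $k$, while $(A'X'A'^\intercal)_{ij}$ must be symmetric in the unordered pair $\{\alpha,\beta\}$; this symmetry re-emerges through the computation precisely because $A_{r\alpha}\in\{\pm 1\}$ on $N_1$, so the main obstacle is careful bookkeeping with signs and ordered-pair conversions rather than any deeper conceptual difficulty.
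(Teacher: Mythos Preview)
Your proof is correct and follows essentially the same approach as the paper: both reduce to the Schur complement (the paper phrases it as elementary row operations followed by Laplace expansion, but $L'-cc^\intercal/y$ is exactly the paper's matrix $L''$) and then verify $L''=A'X'A'^\intercal$ entrywise via the same algebraic manipulation. The only cosmetic difference is that the paper first normalizes to $A_{r,i}=1$ on $N_1$ by flipping column signs, whereas you keep the signs and use $A_{r\alpha}^2=1$ directly; the computations are otherwise identical.
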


\begin{proof}[Proof sketch of \Cref{prop:removevertex}]
    In a first step, we establish that there is a representation matrix $A\in\R^{r\times n}$ of $M$, such that $A$ is totally unimodular and the set of edges $\{v_iv_\ell:i\in[\ell-1]\}$ corresponds to the support of the $r$-th row of $A$.
    In addition, we can efficiently compute $A$.
    Secondly, we show that if $A'$ is obtained by performing a star-mesh transformation on $A$ with respect to $r$, then $A'$ is a representation of $M'$ (up to parallel elements) and in particular still totally unimodular.
    We remove parallel elements and update the rational functions in order to obtain the claimed result, using \Cref{thm:matrix-tree} and \Cref{matrix-star-mesh}.
\end{proof}

\subsection*{Acknowledgments}
We thank Matthias Walter for discussions about matroid decompositions. We thank Steven Noble for clarifications on the hardness of counting bases. We thank Samuel Fiorini for initiating the group retreat in Wissant in June 2024, during which this project gained momentum. Part of this work was completed while Christoph Hertrich was affiliated with Université \mbox{Libre} de Bruxelles, Belgium, and received support by the European Union's Horizon Europe research and innovation program under the Marie Skłodowska-Curie grant agreement No 101153187---NeurExCo.
Stefan Kober acknowledges funding from \emph{Fonds de la Recherche Scientifique} - FNRS through research project BD-DELTA-3 (PDR 40028812).

\bibliographystyle{abbrvnat}
\bibliography{ref}

\newpage

\appendix

\section{Missing proofs from \Cref{sec:prelims} and further preliminaries}
\label{sec:furtherprelims}

\subsection{Arithmetic circuits and neural networks}

\circuittropicalization*

To prove \Cref{prop:circuit-tropicalization}, we use the following lemma.

\begin{lemma}\label{lem:semiringhomo}\
	\begin{enumerate}[(i)]
		\item The rational functions of the form \eqref{eq:rational_function} together with $+$ and $\times$ form a semiring with multiplicative inverses.
		\item The CPWL functions of the form \eqref{eq:tropical_function} together with $\max$ and $+$ form a semiring with multiplicative inverses.
		\item The map $\trop$ is a semiring homomorphism between these two semirings.
	\end{enumerate}
\end{lemma}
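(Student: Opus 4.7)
The plan is to verify each of the three claims by direct calculation, carefully tracking the supports of the polynomials and max-expressions involved so that the tropicalization formula \eqref{eq:tropical_function} applies cleanly.

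For (i), I would write $f=P_f/Q_f$ and $g=P_g/Q_g$ with $P_\bullet,Q_\bullet$ polynomials having coefficients in $\mathbb{Z}_{\geq 1}$ and exponent vectors in $\mathbb{Z}_{\geq 0}^n$. Closure under sum $(P_fQ_g+P_gQ_f)/(Q_fQ_g)$ and product $(P_fP_g)/(Q_fQ_g)$ reduces to the fact that the positive integers are closed under $+$ and $\times$, so no monomial ever cancels. The multiplicative inverse $Q_f/P_f$ is again of the form \eqref{eq:rational_function}.

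For (ii), I would check closure under the tropical operations. The tropical product is ordinary $+$, and via $\max_{\mathbf i}\mathbf i^\top\mathbf z+\max_{\mathbf j}\mathbf j^\top\mathbf z=\max_{(\mathbf i,\mathbf j)}(\mathbf i+\mathbf j)^\top\mathbf z$ the sum of two functions of form \eqref{eq:tropical_function} is indexed by Minkowski sums of the original index sets and hence again of that form. The tropical sum is $\max$; here I would use the identity $\max\{a-b,c-d\}=\max\{a+d,c+b\}-(b+d)$ to rewrite the max of two differences as a single difference of two max-expressions, which upon expansion is again of form \eqref{eq:tropical_function}. The multiplicative tropical inverse is negation, which simply swaps the two max-terms.

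For (iii), I would verify $\trop(fg)=\trop(f)+\trop(g)$, $\trop(f+g)=\max(\trop(f),\trop(g))$, and $\trop(1/f)=-\trop(f)$ by comparing the index sets computed in (i) with those from (ii). The product and inverse identities follow directly; the addition identity reduces to the observation that the support of $P_fQ_g+P_gQ_f$ equals the union of the supports of the two summands. This last observation is the key step---and the reason the paper insists on coefficients in $\mathbb{Z}_{\geq 1}$: if negative coefficients were allowed, polynomial addition could annihilate monomials, shrinking the support and breaking $\trop(f+g)=\max(\trop(f),\trop(g))$. Restricting to positive integer coefficients rules this out and is precisely what makes $\trop$ a well-defined semiring homomorphism on subtraction-free rational functions.
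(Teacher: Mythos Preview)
Your proposal is correct and follows essentially the same approach as the paper. The paper handles (i) and (ii) even more tersely (just ``straightforward to verify''), and for (iii) it cites the polynomial case from \cite{joswig2021essentials} before extending to rational functions via the same fraction manipulations you describe; the no-cancellation observation you highlight is exactly the content of the paper's footnote.
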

Note that, in particular, neither the class of rational functions nor CPWL functions have in general additive inverses in these respective semirings.
\begin{proof}
	Items (i) and (ii) are straight-forward to verify. To prove item (iii), first note that the same statement is known for the case of polynomials instead of rational functions, compare \cite[Prop.~2.8]{joswig2021essentials}\footnote{Note that no cancellation can occur when adding two polynomials as we assume the coefficients to be positive.}. To also see it for rational functions, let $f/g$ and $f'/g'$ be two rational functions of the form \eqref{eq:rational_function}. Using the semiring homomorphism property for polynomials, we calculate 
	\begin{align*}
		\trop\Big(\frac{f}{g} + \frac{f'}{g'}\Big) &= \trop\Big(\frac{fg'+f'g}{gg'}\Big) = \trop(fg'+f'g)-\trop(gg') \\&= \max\{\trop(f)+\trop(g'),\trop(f')+\trop(g)\}-\trop(g)-\trop(g')\\&=
		\max\{\trop(f)-\trop(g),\trop(f')-\trop(g')\}
		\\&= \max\Big\{\trop\Big(\frac{f}{g}\Big),\trop\Big(\frac{f'}{g'}\Big)\Big\}
	\end{align*}
	and
	\begin{align*}
		\trop\Big(\frac{f}{g} \cdot \frac{f'}{g'}\Big) &= \trop\Big(\frac{ff'}{gg'}\Big) = \trop(ff')-\trop(gg') \\&= \trop(f)-\trop(g)+\trop(f')-\trop(g')\\&= \trop\Big(\frac{f}{g}\Big)+\trop\Big(\frac{f'}{g'}\Big),
	\end{align*}
	implying the statement.
\end{proof}

\begin{proof}[of \Cref{prop:circuit-tropicalization}]
    We prove the claim by induction on the size of the circuit. Indeed, for the induction start, we use that the input gates compute the rational function $\mathbf z\mapsto z_j$ for some index $j$. 
    The tropicalization of this is still $z_j$, which is also what is computed by the corresponding input gate of a tropical circuit. For the induction step we consider the output gate of the circuit. By induction, the claim is true for the two sub-circuits that compute the expressions fed into the output gate. The claim then follows for the entire circuit using \Cref{lem:semiringhomo}.
\end{proof}

\subsection{Further matroid basics}\label{app:matroid-basics}

\paragraph{Graph basics.}
We consider undirected graphs $G=(V,E)$, where $V$ is the set of vertices and $E$ is the set of edges.
Unless otherwise specified our graphs are simple, i.e., they do not have loops or parallel edges.
We denote the complete graph on $i$ vertices by $K_i$ and the complete bipartite graph with partition into stable sets of size $i$ and $j$ by $K_{i,j}$.
Let $G,H$ be graphs and $\{V_h:h\in V_h\}$ be a partition of $V(G)$ into connected subsets. 
We say that $G$ has an \emph{$H$-model}, if for every edge $h_1h_2\in E(H)$, there is an edge $v_1v_2\in E(G)$, such that $v_i\in V_{h_i}$ for $i\in[2]$.
We also say that $H$ is a \emph{minor} of $G$.
Given a subset of the vertices $X\subseteq V$ of $G$, let $E':=\{e\in E:e\subseteq X\}$.
We denote the \emph{induced subgraph} on $X$ by $G[X]:=(X,E')$.

We say that a graph $G=(V,E)$ is \emph{$k$-connected}, if its corresponding graphic matroid $M(G)$ is $k$-connected.
Observe that a $k$-separation for $G$ corresponds to a partition of the edge set $E=E_1\cup E_2$, where $|E_1|\ge k$, $|E_2|\ge k$ and $\left|\left(\bigcup_{e\in E_1}e\right) \cap \left(\bigcup_{e\in E_2}e\right)\right|\le k$.
Note that our definition of separations is closely related to vertex cuts, but does not coincide with the usual notion of edge or vertex cuts in graphs. 

\paragraph{Matroid minors.}
We say that a matroid $N$ is a minor of $M$, if we can obtain a matroid isomorphic to $N$ from $M$ by deleting a subset of elements $U$ and contracting a subset of elements $W$.
We say that a minor is proper if $|U|+|W|\ge1$. 

\begin{remark} \label{rem:MFMC-matrices}
Here are the matrices $A^{10}$ and $A^{12}$ used to define regular matroids: 
    \[
A^{10}:=\begin{bmatrix}1&0&0&1&1\\1&1&0&0&1\\0&1&1&0&1\\0&0&1&1&1\\1&1&1&1&1\end{bmatrix},\qquad
A^{12}:=\begin{bmatrix}1&0&1&1&0&0\\0&1&1&1&0&0\\1&0&1&0&1&1\\0&1&0&1&1&1\\1&0&1&0&1&0\\0&1&0&1&0&1\end{bmatrix} \enspace .
\]
\end{remark}

\paragraph{Matroid representations from oracles.}
Given an $\binary$-representable matroid $M=(E,\B)$, and a base $B\in\B$, there is a representation matrix $A$ of $M$, where $A=(I_r\,C)$, and the columns of $C$ correspond to the incidence vectors of the \emph{fundamental circuits} with respect to $B$, i.e., for every element $e\in E\setminus B$ the unique circuit contained in $B\cup \{e\}$.
It is well-known that a polynomial number of (independence) oracle calls suffices to obtain such a representation matrix, see e.g.~\cite[Proposition~9.2.2, Proposition~9.4.23]{oxley2006matroid}.
If $M$ is regular, we can use Camion's algorithm~\cite{camion1964matrices} (see \cite[Corollary~9.2.7]{truemper1992matroid}) to find a signing of the entries of $A$ over $\R$ in order to find a totally unimodular representation matrix of $M$ in polynomial time.
Thus, we can always assume access to a representation matrix for any binary matroid, and a totally unimodular signing if the matroid is regular, at the cost of a polynomial overhead in terms of elementary operations, including independence queries.

\fundcocircuit*
\begin{proof}
    Let $d\in D$, and consider a basis $\dual B$ of $\dual M$ such that $D\setminus\{d\}\subseteq B^*$.
    Let $A^*=(I\ C)\in\binary^{(n-r)\times n}$ be the representation matrix of $\dual M$ given by the fundamental circuits with respect to $\dual B$ as introduced in \Cref{sec:matroids}.
    We permute the columns of $C$, such that the element $d$ corresponds to the last column of $A^*$. 
    Note that by definition, this column has a $1$ in a given row $A_i$ if and only if the corresponding column of the identity matrix corresponds to some element of $D\setminus\{d\}$.
    Clearly, $A:=(C^\intercal\ I)$ is a representation matrix of $M$, such that $D=\mathrm{supp}(A_r)$.
\end{proof}

\paragraph{$\Delta$-$Y$-exchanges.}
Let $M=(E,\B)$ be a matroid with three specified elements $D:=\{d_1,d_2,d_3\}\subseteq E$, such that $D$ forms a circuit and does not contain a cocircuit.
We define a new matroid $M'=(E,\B')$. 
Specifically, we say that $B'\in\B'$ if and only if
\begin{enumerate}[(i)]
    \item $B'\cap D=\{d_i\}$ and $B'-d_i\in\B$ for some $i\in[3]$, \emph{or}
    \item $|B'\cap D|=2$ and $B'\symdiff D\in\B$, ($\symdiff$ denotes the symmetric difference), \emph{or}
    \item $|B'\cap D|=3$ and $B'-d_i\in\B$ for some $i\in[3]$.
\end{enumerate}
We denote $Y_D(M):=M'$, and call the operation to obtain $M'$ from $M$ a \emph{$\Delta$-$Y$-exchange at $D$}.
Conversely, given three specified elements $D:=\{d_1,d_2,d_3\}\subseteq E$ such that $D$ forms a cocircuit and does not contain a circuit, we denote $\Delta_{D}(M):=\dual{(Y_{D}(\dual M))}$ and call the corresponding operation a \emph{$Y$-$\Delta$-exchange at $D$}.

\section{Missing proofs from \Cref{sec:decomposing}}\label{app:decomposing}
We point out a direct connection between the notions of $\Delta$-$Y$-exchanges and $\Delta$-sums, that will be helpful in the proof of \Cref{prop:3-connected-child}.
\begin{lemma}[{\cite[Proposition~11.5.8]{oxley2006matroid}}]\label{delta-wye}
    Let $M$ be a binary matroid with three specified elements $D:=\{d_1,d_2,d_3\}$ such that $D$ forms a circuit and does not contain a cocircuit.
    Label the edges of an arbitrary triangle in $K_4$ by the elements of~$D$.
    Then, 
    \[Y_D(M)\simeq M\threesum M(K_4).\]
\end{lemma}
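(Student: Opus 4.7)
The plan is to verify directly that the bases of $Y_D(M)$ and of $M\threesum M(K_4)$ correspond under a natural bijection. Label the vertices of $K_4$ by $v_0,v_1,v_2,v_3$ so that $d_i$ is the triangle edge opposite~$v_i$ (i.e., $d_i=v_jv_k$ for $\{i,j,k\}=\{1,2,3\}$) and $e_i\coloneqq v_0v_i$ is the complementary edge at the fourth vertex. Define $\phi\colon E(Y_D(M))\to E(M\threesum M(K_4))$ by $\phi(d_i)\coloneqq e_i$ and the identity on $E(M)\setminus D$. Both matroids have rank $r(M)+1$, so the claim reduces to showing that $B\in\B(Y_D(M))$ if and only if $\phi(B)\in\B(M\threesum M(K_4))$.

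I would first classify the spanning trees of $K_4$ by their intersection with $D$: exactly one tree uses no triangle edge (the star $\{e_1,e_2,e_3\}$ at $v_0$), six use exactly one (of the form $\{e_i,e_j,d_m\}$ with $m\in\{i,j\}$), nine use exactly two (of the form $\{e_i,d_j,d_k\}$, the star at $v_i$), and none uses all three since $D$ is a cycle. Then split the verification by $k\coloneqq|B\cap D|\in\{1,2,3\}$; no basis of $Y_D(M)$ is disjoint from $D$, since $D$ becomes a cocircuit in $Y_D(M)$. Case $k=1$ matches condition~(i) of the $\Delta$-sum with $B_1\coloneqq B\setminus\{d_i\}$ and $B_2\coloneqq\{e_i,d_j,d_k\}$, and case $k=3$ matches condition~(ii) with $B_1\coloneqq B\setminus\{d_r\}$ (for some $r$ making $B_1$ a basis of $M$) and $B_2\coloneqq\{e_1,e_2,e_3\}$. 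In both cases the condition ``$B\setminus\{d_\bullet\}\in\B(M)$'' from the definition of $Y_D$ translates directly into the $M$-basis requirement on $B_1$ from the definition of the $\Delta$-sum.

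The main obstacle is case $k=2$, where the $Y_D$-definition yields only the symmetric statement $B\symdiff D\in\B(M)$, whereas condition~(iii) of the $\Delta$-sum demands simultaneous swap conditions in both $M$ and $M(K_4)$. My plan is to exploit the binary structure. Choose a representation of $M$ over~$\binary$ with $d_1+d_2+d_3=0$, and let $B_1\coloneqq(B\setminus D)\cup\{d_k\}=B\symdiff D$, where $k$ is the third index outside $B\cap D=\{d_i,d_j\}$. The relation $d_i+d_j=d_k$ in the column space forces $d_k$ to appear in exactly one of the fundamental circuits $C(d_i,B_1)$, $C(d_j,B_1)$. By symmetric exchange, this is equivalent to: exactly one of $(B\setminus D)\cup\{d_i\}$, $(B\setminus D)\cup\{d_j\}$ is itself a basis of $M$; call this index $\ell^\star$ and the remaining index $\ell^{\star\star}$. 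Taking $i'\coloneqq k$, $j'\coloneqq\ell^{\star\star}$, and $B_2\coloneqq\{e_i,e_j,d_{\ell^{\star\star}}\}$, the three requirements of~(iii) hold: $B_1\in\B(M)$ is the hypothesis, the $M$-swap condition reduces to $(B\setminus D)\cup\{d_{\ell^\star}\}\in\B(M)$ by choice of $\ell^\star$, and the $M(K_4)$-swap reduces to recognising $\{e_i,e_j,d_{\ell^\star}\}$ as the other 2-$e$ 1-$d$ spanning tree of $K_4$. The converse direction follows because, given $\phi(B)\cap\{e_1,e_2,e_3\}=\{e_i,e_j\}$, only condition~(iii) can apply, and any valid decomposition unwinds to the $Y_D$-condition via the same circuit argument.

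An alternative route would proceed via binary representations: build an explicit $(r+1)\times n$ matrix for $M\threesum M(K_4)$ using the generalized parallel connection along $D$ followed by deletion of the $D$-columns, and compare with a matrix description of $Y_D(M)$ obtained by adjoining to a suitable representation of $M$ a new row that records the triad created by the $\Delta$-$Y$ exchange. If the combinatorial case analysis proves cumbersome, this representational viewpoint likely yields a cleaner proof.
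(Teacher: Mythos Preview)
Your plan is correct and follows essentially the same route as the paper: set up the bijection $\phi$ sending $d_i\mapsto e_i$, split by $|B\cap D|$, and in each case exhibit the pair $(B_1,B_2)$ witnessing the $\Delta$-sum axioms. The only notable difference is in the $k=2$ case, where the paper extracts the needed element of $B\cap D$ by a one-line basis-exchange argument (since $D$ is a circuit meeting $B_1$ in $d_k$, some $d_\ell\in D\setminus B_1$ can replace $d_k$), while you argue via the $\binary$-relation $d_1+d_2+d_3=0$ to get the slightly sharper ``exactly one'' statement; both work. One cosmetic slip: not all nine spanning trees with two triangle edges are stars at $v_i$ (only three are; the other six are paths), but this does not affect the argument.
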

\begin{proof}
    We denote the remaining elements of $M(K_4)$ by $y_1,y_2,y_3$, such that $y_i$ is adjacent to $d_j$ where $j\in[3]\setminus\{i\}$.
    We define a map $\phi:E(Y_D(M))\to E(M\threesum M(K_4))$ mapping any element of $E(M)\setminus D$ to itself, and any $d_i$ to $y_i$ for $i\in[3]$.
    Further, we naturally extend $\phi$ to subsets of elements.
    We claim that $\phi$ is an isomorphism between $Y_D(M)$ and $M\threesum M(K_4)$ by giving a natural mapping between the respective sets of bases.

    Given a basis $B$ of $Y_D(M)$, by the definition of $\Delta$-$Y$-exchanges, there is a corresponding basis $B'$ of $M$ such that $B$ and $B'$ differ only on $D$ and $|B|-|B'|=1$.
    Furthermore, there is a basis $B''$ of $M(K_4)$ such that $B''\cap\{y_1,y_2,y_3\}=\phi(B\cap D)$.
    We claim that we can choose $B'$ and $B''$ in a way such that they fulfill the axioms for the $\Delta$-sum.

    First, clearly $|B'\cap D|+|B''\cap D|=2$. 
    If either term is $0$, there is nothing to show.
    Otherwise, $|B\cap D|=2$ and $B'=B\symdiff D$.
    By the basis-exchange property for matroids, there is some $x_i\in B\cap D$ such that $(B'\setminus D)\cup \{x_i\}$ is a basis of $M$.
    We define $B'':=\phi(B\cap D)\cup (B\cap D-d_i)$.
    It is easy to check that $B'$ and $B''$ fulfill all necessary properties.
   
    The converse direction uses the same construction. 
\end{proof}

We begin by stating an inductive variant of the $3$-connected case of Seymour's decomposition that crucially powers our proof of \Cref{prop:3-connected-child}.

\begin{lemma}[{\cite[Lemma~11.3.18]{truemper1992matroid}}]\label{lem:inductive-decomposition}
    Let $M$ be a $3$-connected, regular matroid that is not graphic, cographic, or isomorphic to $\Rten$.
    Let $X$ be either a triangle or a single element of $M$.
    Then there exist $3$-connected regular matroids $M_1$ and $M_2$ such that $M=M_1\threesum M_2$, $M_1$ contains $X$ and $M_2$ is graphic or cographic.
    Moreover, $|E(M_2)|\ge9$.
\end{lemma}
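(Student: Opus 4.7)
}
The plan is to proceed by strong induction on $|E(M)|$, using Seymour's decomposition theorem (\Cref{thm:decomp}) as a single-step tool. The key preparatory observation is that $\Rten$ contains no triangle (all of its circuits have size at least four), so $\Rten$ cannot appear as a factor in any $\Delta$-sum along a triangle. Hence, whenever \Cref{thm:decomp} applied to a $3$-connected regular matroid that is not graphic, cographic, or $\Rten$ yields a non-trivial decomposition $M = N_1 \threesum N_2$, both factors $N_i$ are again $3$-connected regular matroids that are not $\Rten$, and the induction hypothesis can be applied to them recursively.

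First I would fix the distinguished set $X$ and, among all Seymour decompositions of $M$, select one whose gluing triangle $D$ avoids $X$, so that $X$ lies entirely on one side, say $X \subseteq E(N_1) \setminus D$; this is possible because $|X| \le 3$ and $X \subseteq E(M) = (E(N_1) \setminus D) \cup (E(N_2) \setminus D)$ is automatically disjoint from $D$, together with the freedom of swapping the two sides. Branching on $N_2$: if $N_2$ is graphic or cographic with $|E(N_2)| \ge 9$, set $M_1 := N_1$, $M_2 := N_2$ and we are done. Otherwise, if $N_2$ is not graphic or cographic, I would apply the induction hypothesis to $N_2$ with $D$ in the role of $X$, obtaining $N_2 = N_1' \threesum M_2$ with $M_2$ graphic or cographic of size at least $9$ and $D \subseteq E(N_1')$. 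Associativity of the $\Delta$-sum then yields $M = (N_1 \threesum N_1') \threesum M_2$; setting $M_1 := N_1 \threesum N_1'$ works, with $3$-connectivity and regularity of $M_1$ inherited from standard preservation properties of the $\Delta$-sum along triangles, and $X \subseteq E(M_1)$ since $X \subseteq E(N_1)$.

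The main obstacle is the remaining case where $N_2$ is graphic or cographic but has only $7$ or $8$ elements, which Seymour's theorem alone does not rule out. Here I would argue by re-routing the decomposition: a $3$-connected graphic or cographic matroid on at most eight elements is, up to duality, one of a short explicit list essentially built from $M(K_4)$, so the gluing triangle $D$ of $N_2$ sits in a very controlled way. For each such case I would exhibit an alternative triangle inside $N_1$ that can serve as a new gluing triangle, absorbing the small graphic piece into the large side and exposing a strictly larger graphic or cographic leaf on the other side, while keeping $X$ on the non-leaf side. This is precisely the refinement that strengthens Seymour's standard cut-off of $7$ to the cleaner cut-off of $9$ stated in the lemma, and it is the step where care is most needed. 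The base of the induction is handled by direct inspection of the smallest $3$-connected regular matroid satisfying the hypotheses, namely $\Rtwelve$, using the explicit representation in \Cref{rem:MFMC-matrices}.
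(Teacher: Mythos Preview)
The paper does not prove \Cref{lem:inductive-decomposition}: it is quoted as \cite[Lemma~11.3.18]{truemper1992matroid} and used as a black box in the proof of \Cref{prop:3-connected-child}. There is thus no in-paper argument to compare your proposal against. For context, Truemper's development does not obtain this lemma by iterating the global decomposition theorem; it proceeds through the $\Rtwelve$-minor route, locating the canonical $3$-separation of an $\Rtwelve$-minor of $M$ and lifting it to a $3$-separation of $M$, which already carries the control over $X$ and the bound $|E(M_2)|\ge 9$.

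Your inductive scheme is plausible in outline but has real gaps as written. The claim that a triangle $X$ of $M=N_1\threesum N_2$ lies entirely on one side happens to be true when both $N_i$ are simple (a triangle meeting both sides would force a two-element cycle in one $N_i$), but your justification ``$X$ is disjoint from $D$, then swap sides'' does not establish this and would fail for a $2$--$1$ split. More seriously, you take for granted several facts that carry most of the weight: that \Cref{thm:decomp} applied to a $3$-connected $M$ yields a single $\Delta$-sum split into $3$-connected parts (this already needs a $3$-separation argument close to what you are proving), that the $\Delta$-sum is associative across two different gluing triangles so that $M=(N_1\threesum N_1')\threesum M_2$, and that $N_1\threesum N_1'$ is again $3$-connected. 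Finally, the residual case $|E(N_2)|\in\{7,8\}$ is handled only by a promise that the small piece ``can be absorbed'' so as to expose a larger leaf while keeping $X$ on the correct side; you give no mechanism for producing the new gluing triangle or for tracking $X$ through the re-routing. These refinements are exactly what the coarse statement of \Cref{thm:decomp} does not supply and what the $\Rtwelve$-based proof handles directly.
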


The following statement on the dual of a $\Delta$-sum can be shown by an analysis of the respective sets of bases.
\begin{lemma}[{\citet{mcguinness2014base}}]\label{dual}
    Let $M_1$ and $M_2$ be binary matroids on ground sets $E_1$ and $E_2$ such that $E_1\cap E_2=\{d_1,d_2,d_3\}:=D$, where, for both matroids, $D$ forms a circuit and does not contain a cocircuit. 
    We denote $M_{iY}:=Y_D(M_i)$ for $i\in[2]$.
    Then, we can relate the $\Delta$-sum at $D$ with the help of duality and $\Delta$-$Y$-exchanges at $D$ as follows
    \[M_{1}\threesum M_{2}\simeq\dual{(\dual{M_{1Y}}\threesum\dual{M_{2Y}})}.\]
\end{lemma}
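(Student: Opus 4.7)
\emph{Proof plan for \Cref{dual}.} The plan is to follow the hint in the paper and prove the isomorphism by directly matching up bases on both sides. First I would check that the right-hand side is well-defined: since $D$ is a circuit of $M_i$ that does not contain a cocircuit, the definition of the $\Delta$-$Y$-exchange turns $D$ into a cocircuit of $M_{iY}$ that does not contain a circuit, so after dualizing, $D$ is a circuit of $M_{iY}^*$ that does not contain a cocircuit. Thus the right-hand side $M_{1Y}^*\threesum M_{2Y}^*$ is legal. As a sanity check, I would verify that the ranks match: by \Cref{delta-wye}, $r(M_{iY})=r(M_i\threesum M(K_4))=r(M_i)+3-2=r(M_i)+1$, so $r(M_{iY}^*)=|E_i|-r(M_i)-1$. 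Applying the $\Delta$-sum rank formula a second time and dualizing gives $r\bigl((M_{1Y}^*\threesum M_{2Y}^*)^*\bigr)=r(M_1)+r(M_2)-2=r(M_1\threesum M_2)$.

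The core of the argument is to exhibit an explicit bijection between bases of $M_1\threesum M_2$ and cobases of $M_{1Y}^*\threesum M_{2Y}^*$, i.e., complements (in $E_1\symdiff E_2$) of bases of $M_{1Y}^*\threesum M_{2Y}^*$. Given a basis $B=(B_1\cup B_2)\setminus D$ of $M_1\threesum M_2$ with $B_i\in\mathcal{B}(M_i)$ disjoint and satisfying one of the three patterns (i)--(iii) on their intersections with $D$, I would construct $\tilde B_i\subseteq E_i$ such that $\tilde B_i\in\mathcal{B}(M_{iY}^*)$, the sets $\tilde B_1,\tilde B_2$ are disjoint, again satisfy one of the three $\Delta$-sum patterns, and $(\tilde B_1\cup\tilde B_2)\setminus D=(E_1\symdiff E_2)\setminus B$. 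The construction is to take $\tilde B_i$ as essentially $E_i\setminus(B_i\cup D)$ up to an adjustment on $D$ dictated by the definition of $Y_D$, which converts each of the three basis patterns $B_i\cap D\in\{\emptyset,\{d_i\},\{d_i,d_j\}\}$ of $M_i$ into a corresponding basis pattern of $M_{iY}$, and hence of $M_{iY}^*$ after complementing on $D$ as well.

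The main obstacle is the combinatorial bookkeeping: the three patterns of the $\Delta$-sum interact with the three patterns of $Y_D$, so one has to verify roughly nine subcases. The subtle one is case (iii) of the $\Delta$-sum, where both $B_1\cap D=\{d_i\}$ and $B_2\cap D=\{d_j\}$ with the simultaneous swap condition $B_i\symdiff(D-d_j)\in\mathcal{B}_i$; this must be shown to correspond, after passing through $Y_D$ and dualization, to another case-(iii) configuration of the $\Delta$-sum on the duals, with the roles of $i$ and $j$ appropriately swapped. Verifying well-definedness and bijectivity of the map ultimately reduces to repeated applications of the basis-exchange property, together with the fact that in a binary matroid the unique circuit in $B+e$ and the unique cocircuit in $(E\setminus B)+e$ are related by $\mathbb{F}_2$-duality; these two facts precisely encode the case distinctions built into the definitions of $\threesum$ and $Y_D$. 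Once the cases are laid out, each reduces to a short manipulation, and the symmetry $i\leftrightarrow j$ in the construction produces the inverse map, giving the required isomorphism.
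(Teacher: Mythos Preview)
The paper does not actually prove \Cref{dual}; it is stated with a citation to McGuinness and the single-sentence remark that it ``can be shown by an analysis of the respective sets of bases.'' Your plan follows exactly this hint, and the preliminary checks you carry out (well-definedness of the right-hand side, the rank computation via \Cref{delta-wye}) are correct.

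As a plan the outline is sound, but be aware that what you wrote is not yet a proof: the nine-way case analysis you describe is asserted rather than executed, and in particular the treatment of case~(iii) of the $\Delta$-sum deserves care. The swap condition in~(iii) is not automatic from basis exchange alone; it encodes a genuine binary-matroid identity (circuits of a binary matroid are closed under symmetric difference modulo smaller circuits), and you should make explicit which binary fact you invoke when you pass from the condition $B_1\symdiff(D-d_j)\in\mathcal B_1$ through $Y_D$ and dualization to the corresponding condition on the other side. Also, your proposed map $\tilde B_i \approx E_i\setminus(B_i\cup D)$ ``up to an adjustment on $D$'' hides precisely the content of the lemma; writing down that adjustment concretely in each of the three patterns is where the work lies. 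None of this is a gap in the \emph{strategy}, but if you intend this as more than a sketch you will need to fill these in.
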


\begin{lemma}\label{planar}
    Let $G=(V,E)$ be a $2$-connected non-planar graph with a $3$-edge cut $\{e_1,e_2,e_3\}$ that partitions $V$ into vertex sets $A,B$.
    If $|A|,|B|\ge2$, then there is an $i\in[3]$, such that $G/e_i$ is non-planar.
\end{lemma}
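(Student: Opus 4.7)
The plan is to use Kuratowski's theorem. Since $G$ is non-planar, it contains a subdivision $K$ of $K_5$ or $K_{3,3}$, and I will find a cut edge $e_i$ whose contraction leaves $K$ intact as a Kuratowski subdivision of $G/e_i$, so that $G/e_i$ is non-planar. Throughout, write $u_i \in A, v_i \in B$ for the endpoints of $e_i$.

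As a preliminary, both $G[A]$ and $G[B]$ must be connected: if $G[B]$ had several components, the edges from each to $A$ would (by $2$-connectivity of $G$) sum to at least $2 \cdot (\text{number of components})$, exceeding the available $3$ cut edges. Next comes a parity count: a subdivision-path of $K$ contains an odd number of cut edges if and only if its two endpoints lie on opposite sides of the $A$-$B$ partition. For a $K_5$-subdivision with $a$ branches in $A$, there are $a(5-a)$ such ``crossing paths'', each using at least one cut edge; since only $3$ cut edges are available, $a(5-a) \le 3$ forces $a \in \{0,5\}$ and all paths stay on one side using $0$ or $2$ cut edges. For a $K_{3,3}$-subdivision, an analogous bipartite count shows that either all six branches lie on one side (and at most $2$ cut edges are used), or the subdivision has a ``$5{+}1$'' branch split with a lone branch $z$ on one side and $3$ crossing paths to the other side that use all three cut edges, one each.

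In the easy case where $K$ uses at most $2$ cut edges, I contract the unused $e_i$: at most one of $u_i, v_i$ is a branch of $K$ (by the same counting), so the merged vertex $w_i$ inherits the branch or subdivision-vertex role of that unique $K$-vertex, and $K$ survives in $G/e_i$. In the hard case, $K$ is a $K_{3,3}$-subdivision with the $5{+}1$ branch split; WLOG $z \in B$. If for some $i$ the $B$-endpoint $v_i \ne z$, then $v_i$ is necessarily a degree-$2$ subdivision vertex of $K$; contracting $e_i$ merges $v_i$ into $u_i$ and the merged vertex keeps $u_i$'s role with the corresponding crossing path from $z$ simply shortened, so $K$ remains a $K_{3,3}$-subdivision in $G/e_i$. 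Otherwise $v_i = z$ for every $i$, i.e., all three cut edges are incident to $z$; but then $z$ is a cut vertex of $G$, since removing it severs $B \setminus \{z\}$ (nonempty because $|B| \ge 2$) from $A$, contradicting $2$-connectivity.

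The main obstacle I anticipate is the careful bookkeeping in verifying that $K$ survives each contraction: one must distinguish whether $u_i, v_i$ are branches, subdivision vertices on a specific path of $K$, or not in $V(K)$ at all, and check that the merge preserves degree-$3$ branches and degree-$2$ subdivision vertices. The $2$-connectivity hypothesis is essential precisely to rule out the degenerate case of all three cut edges sharing an endpoint in $B$, which is also where the $3$-cut structure becomes crucial; indeed, without a cut of this form the lemma would fail, as $K_{3,3}$ itself is non-planar yet every one of its single-edge contractions is planar.
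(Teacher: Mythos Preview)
Your case analysis and your treatment of the $5{+}1$ split are correct. The gap is in the easy case when exactly two cut edges are used. You contract the unused edge $e_3$ and observe that at most one of $u_3,v_3$ is a \emph{branch}, but you then speak of ``that unique $K$-vertex'', implicitly assuming at most one of $u_3,v_3$ lies in $V(K)$ at all. This is false: $v_3\in B$ can be an internal subdivision vertex of the unique path $P$ that dips into $B$, while $u_3\in A$ is a branch not on $P$ (or a subdivision vertex of a different path). Identifying such a pair can destroy the Kuratowski subgraph. For a concrete failure, take $K_{3,3}$ on parts $\{x_1,x_2,x_3\}$ and $\{y_1,y_2,y_3\}$, subdivide $x_1y_1$ into the path $x_1,b_1,b_2,y_1$, and add the single extra edge $x_2b_1$. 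With $A$ the six branch vertices and $B=\{b_1,b_2\}$, the cut is $\{x_1b_1,\; y_1b_2,\; x_2b_1\}$; the unused cut edge is $e_3=x_2b_1$. This $G$ is $2$-connected and non-planar, yet $G/e_3$ \emph{is} planar (after suppressing the degree-$2$ vertex $b_2$ one gets $K_{3,3}$ minus an edge plus one chord inside a colour class, which embeds in the plane). So your prescribed contraction fails here, even though the lemma still holds because $G/e_1$ is non-planar.

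The repair is easy: when two cut edges are used, contract one of \emph{those}. If $e_1\in P$, both endpoints of $e_1$ lie on $P$ (the $B$-endpoint is an internal vertex since all branches are in $A$), so contracting $e_1$ simply shortens $P$ and the subdivision survives intact. The paper sidesteps the whole difficulty by arguing with $K_5$/$K_{3,3}$ \emph{models} $(V_1,\dots)$ that \emph{partition} $V(G)$: the configuration ``all model sets on one side'' then forces the other side to be empty, contradicting $|A|,|B|\ge 2$, so only the $5{+}1$ case remains, and it is handled by moving a single boundary vertex between model sets rather than by any degree bookkeeping after contraction.
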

\begin{proof}
    We prove the statement by contradiction.
    Assume there is a $2$-connected graph $G$ with either a $K_{3,3}$ or a $K_5$-model and associated partition of the vertex set $(V_1,V_2,\dots)$.
    Further, let $\{e_1,e_2,e_3\}$ be a $3$-edge cut with partition $A,B$ of the vertex set with $|A|,|B|\ge2$.
    If $e_i=v_{i_1}v_{i_2}$ such that $\{v_{i_1},v_{i_2}\}\subseteq V_j$ for some $i\in[3]$ and some $j\in[6]$, then $G/e_i$ has a model of $K_{3,3}$ or $K_5$.
    Thus, any $V_j$ is a subset of $A$ or of $B$.
    This is not possible for $K_5$.
    Therefore, we can assume that $(V_1,\dots,V_6)$ is a model of $K_{3,3}$, see~\Cref{fig:graphs}.

    Further, only one $V_j$ can be contained in $A$, we assume without loss of generality that $V_1=A$.
    We denote the endpoints of the edges $\{e_1,e_2,e_3\}$ by $R:=A\cap\left(\bigcup_{i\in[3]}e_i\right)$.
    Let $r\in R$.
    Since $G$ is $2$-connected, any connected component of $G[A\setminus\{r\}]$ has to contain some vertex $r'\in R$.
    Since $|A|\ge2$, this implies in particular that $|R|\ge2$, and there is some $r_1\in R$ that is incident to exactly one of $\{e_1,e_2,e_3\}$.
    If $G[A\setminus\{r_1\}]$ is not connected, then it has exactly two connected components, each containing a vertex in $R$, which we denote by $r_2$ and $r_3$. 
    Thus there is an $r_1$-$r_2$-path in $G[A]$ that does not contain $r_3$.
    Therefore, there is a vertex $r^*\in R$ that is incident to exactly one edge in $\{e_1,e_2,e_3\}$ (we assume that this is $e_1$ without loss of generality), such that $G[A\setminus\{r^*\}]$ is connected.
    Let $j\in[6]$, such that $e_1$ crosses from $V_1$ to $V_j$.
    Then, we can move $r^*$ from $V_1$ to $V_j$ and maintain that $(V_1,\dots,V_6)$ is a model of $K_{3,3}$, see~\Cref{fig:graphs}.
    But then, $e_1$ is contained in $V_j$, and therefore $G/e_1$ is non-planar.
\end{proof}

    \begin{figure}[!h]
    \centering
    \includegraphics[width=4cm]{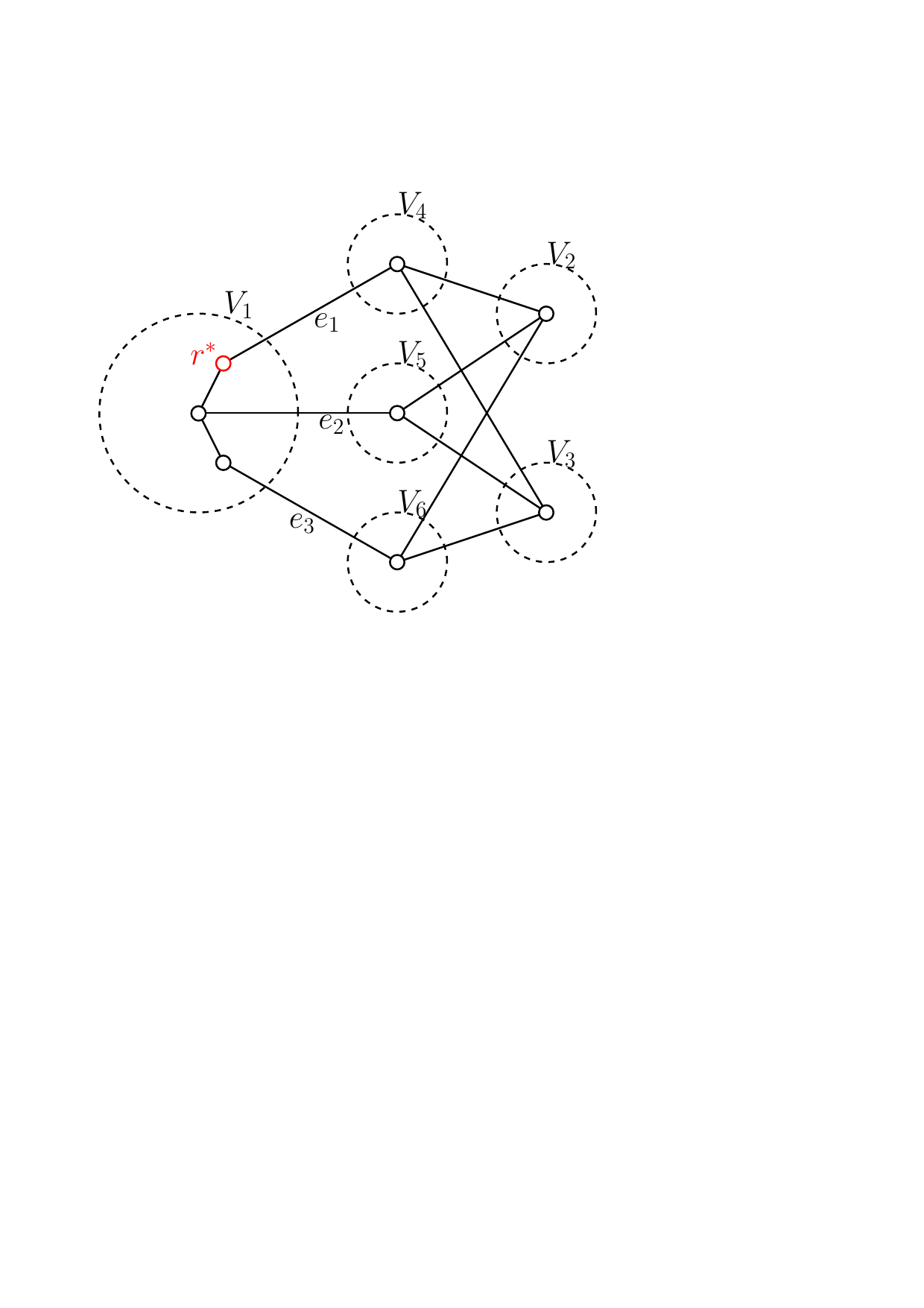}\hspace{2cm}
    \includegraphics[width=4cm]{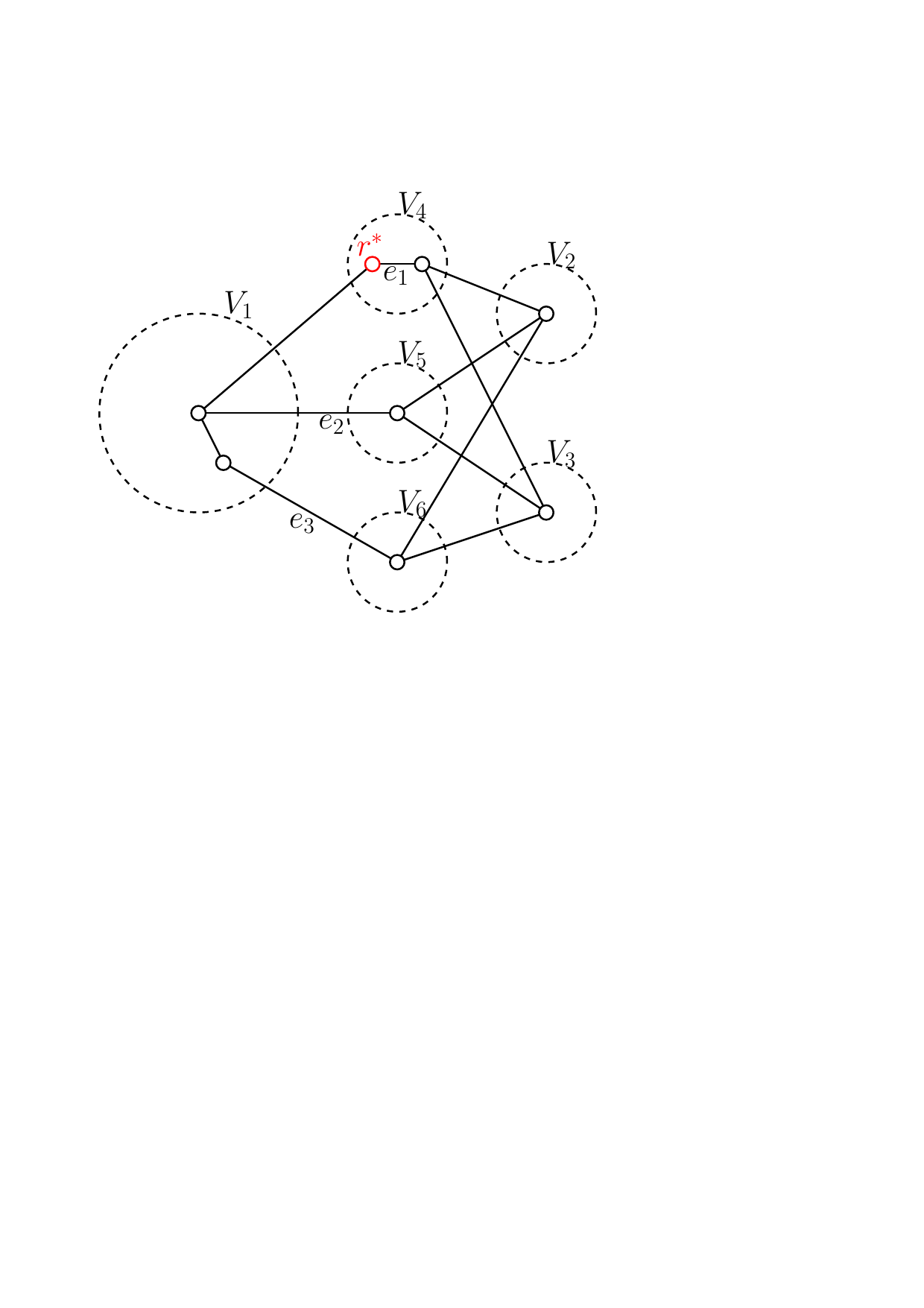}
    \caption{Modification of the partition of a $K_{3,3}$-model, such that both endpoints of the edge $e_1$ are contained in the set $V_4$. The figure on the left shows the original model, and the figure on the right shows the modified model.}
    \label{fig:graphs}
    \end{figure}

\begin{lemma}\label{dy}
    Let $M$ be a $3$-connected regular matroid that is cographic, but not graphic and let $D=\{d_1,d_2,d_3\}$ be a circuit of $M$ that does not contain a cocircuit.
    If $Y_D(M)$ is graphic, then it is also cographic.
\end{lemma}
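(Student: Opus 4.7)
Since $M$ is cographic but not graphic, there is a $3$-connected non-planar graph $G$ with $M = \dual{M(G)}$. Circuits of $M$ correspond to bonds of $G$ and cocircuits of $M$ to cycles of $G$, so the hypothesis gives that $D$ is a $3$-edge bond of $G$ containing no cycle. I distinguish two cases: (A) $D$ is the star at a degree-$3$ vertex $v$ of $G$, or (B) $D$ separates $V(G)$ into parts $A, B$ with $|A|, |B| \geq 2$. In Case A, let $G'$ be obtained from $G$ by the graph $Y$-$\Delta$ exchange at $v$: remove $v$ and add a triangle with labels $d_1, d_2, d_3$ on the three neighbors of $v$. The standard graph-matroid correspondence then gives $\Delta_D(M(G)) = M(G')$, and therefore $Y_D(M) = (\Delta_D(M(G)))^* = \dual{M(G')}$ is cographic, as required.

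In Case B, I show that the hypothesis ``$Y_D(M)$ is graphic'' leads to a contradiction. Since $G$ is $2$-connected and non-planar, \Cref{planar} produces an index $i$ such that $G/d_i$ is non-planar. The duality identity $\dual{M(G)} \setminus d = \dual{M(G/d)}$ then yields that $M \setminus d_i = \dual{M(G/d_i)}$ is cographic but not graphic. The crux of the case is the matroid isomorphism $Y_D(M)/d_i \cong M \setminus d_i$. To prove it, use \Cref{delta-wye} to write $Y_D(M) \cong M \threesum M(K_4)$, where the isomorphism sends the element $d_i$ of $Y_D(M)$ to the edge $y_i \in E(K_4) \setminus D$ opposite $d_i$ (so, if $d_i = v_j v_k$ lies on the triangle of $K_4$ on $\{v_1, v_2, v_3\}$, then $y_i = v_i v_4$). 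Because $y_i$ belongs to only one side of the $\Delta$-sum, contraction commutes with the sum, giving $Y_D(M)/d_i \cong M \threesum (M(K_4)/y_i)$. A direct inspection of $K_4/y_i$ shows that $D$ remains a triangle (circuit) while the remaining edges $y_j, y_k$ become parallel to $d_k, d_j$ respectively; hence the $\Delta$-sum absorbs $D$ and identifies $y_j, y_k$ with $d_k, d_j$, yielding a matroid isomorphic to $M \setminus d_i$ up to a relabeling of $D \setminus \{d_i\}$.

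With this isomorphism in hand, $Y_D(M)/d_i$ is non-graphic; but graphicness is preserved under minors, so $Y_D(M)$ cannot be graphic, a contradiction. Hence only Case A can occur, and the conclusion follows. The main technical obstacle is the isomorphism $Y_D(M)/d_i \cong M \setminus d_i$, whose verification requires a careful analysis of the $\Delta$-sum with the slightly degenerate matroid $M(K_4)/y_i$ (which contains parallel pairs). The remaining ingredients --- Case A, the application of \Cref{planar}, and the deletion-contraction duality --- are routine.
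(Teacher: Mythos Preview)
Your proof is correct and follows essentially the same route as the paper: the same case split on whether the $3$-edge bond $D$ in $G$ isolates a single vertex, the same use of \Cref{planar} in the non-trivial case, and the same key identity $Y_D(M)/d_i\cong M\setminus d_i$ combined with minor-closedness of graphicness. The only difference is cosmetic: the paper reads $Y_D(M)/d_i\cong M\setminus d_i$ directly off the basis description of the $\Delta$-$Y$ exchange, whereas you derive it through \Cref{delta-wye} and a contraction in $M(K_4)$ (correctly noting the relabeling of $D\setminus\{d_i\}$).
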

\begin{proof}
    We show the contrapositive statement, i.e., if $Y_D(M)$ is not cographic, then it is also not graphic.
    Let $G$ be a graph such that $\dual{M(G)}=M$.
    Then $D$ corresponds to a $3$-edge cut in $G$, inducing a partition $(A,B)$ of $V(G)$.
    If $|A|=1$ or $|B|=1$, then the $\Delta$-$Y$-exchange preserves cographicness by \Cref{delta-wye}.

    By the definition of $\Delta$-$Y$-exchanges, we have $Y_D(M)/d_i\simeq M\backslash d_i$ for $i\in[3]$.
    Recall that the deletion of elements from a cographic matroid corresponds to contracting the corresponding edge in $G$.
    Therefore, by \Cref{planar}, there is some $i\in[3]$, such that $\dual{M(G/d_i)}=M\backslash d_i$ is a cographic matroid that is not graphic.
    Thus, $Y_D(M)$ has a non-graphic minor and therefore is non-graphic itself.
\end{proof}

\threeconnectedchild*
\begin{proof}[Proof of \Cref{prop:3-connected-child}]
    We prove the statement by induction on the number of elements of $M$, extending on \Cref{lem:inductive-decomposition}.
    Clearly, $\Rtwelve$ is the unique smallest regular matroid that is $3$-connected and not graphic, cographic or isomorphic to $\Rten$. 
    Further, it has the claimed decomposition.
    Let $M$ be a $3$-connected, regular matroid that is not graphic, cographic, or isomorphic to $\Rten$ with at least $13$ elements, and let $M_1$, $M_2$ be a decomposition of $M$ provided by \Cref{lem:inductive-decomposition}, where $X$ is an arbitrary element of $M$.
    We denote the common triangle of $M_1$ and $M_2$, which is joined by the $\Delta$-sum, by $D$.
    If $M_2$ is graphic, then the decomposition fulfills the claimed properties.

    By \Cref{dual}, we have that $\dual M=\dual{(M_{1Y})}\threesum\dual{(M_{2Y})}$.
    Observe that connectivity is invariant under duals, so $\dual M$ is $3$-connected, as is $\dual{(M_{2Y})}$.
    Further, $D$ corresponds to a triangle of $\dual{(M_{2Y})}$.
    If $\dual{(M_{2Y})}$ is graphic, then the decomposition fulfills the claimed properties. 
    By \Cref{dy}, we have that if $\dual{(M_{2Y})}$ is cographic, then it is also graphic, in which case we are again done.
    Since $\dual{(M_{2Y})}$ contains a triangle, it cannot be isomorphic to $\Rten$. 
    Therefore, we can apply \Cref{lem:inductive-decomposition} on $\dual{(M_{2Y})}$ with $X=D$.
    Since $\dual{(M_{2Y})}$ has strictly less elements than $M$, the claim follows by induction.
\end{proof}

\cutcocircuit*
\begin{proof}
    Directly from the definition of cocircuits, we get that $D\subseteq E(M)$ is a cocircuit of $M$ if and only if it intersects every basis of $M$ and it is inclusion-wise minimal for this property.

    By the definition of the $\Delta$-sum, any basis of $M$ corresponds to a basis of $M(K_\ell)$ (possibly with deleting some of the edges in $v_1v_2, v_1,v_3, v_2,v_3$). 
    Therefore, $\{e\in E(G):v_\ell\in e\}$ intersects every basis of $M$.
    Further, for any $i\in[\ell-1]$, there is a basis $B$ of $M(K_\ell)$, such that $B\cap\{e\in E(G):v_\ell\in e\}=v_iv_\ell$, using exactly two edges in $v_1v_2, v_1,v_3, v_2,v_3$), take e.g. the path with the ordering $v_1,v_2,v_3,\dots,v_{i-1},v_{i+1},\dots,v_{\ell-1},v_i,v_\ell$.
    This gives rise to a corresponding basis of $M$, certifying that $\{e\in E(G):v_\ell\in e\}$ is inclusion-wise minimal.
\end{proof}

\section{Missing proofs from \Cref{sec:oneandtwosum}}

\solvetwosum*
\begin{proof}
    We claim that the basis generating polynomial $f_M$ can be written as
    \begin{align}\label{eq:delcontr}
        f_M(x) = f_{M_1}\Big(x_{E(M_1)-d},\frac{f_{M_2\setminus d}(x_{E(M_2)-d})}{f_{M_2/d}(x_{E(M_2)-d})}\Big)\cdot f_{M_2/ d}(x_{E(M_2)-d}),
    \end{align}
    where $x_{E(M_i)-d}$ is the restriction of $x = (x_e)_{e \in E}$ to the elements of $M_i$ without $d$ and the fraction is the argument at the position of element $d$. Before we prove \eqref{eq:delcontr}, we quickly argue why it implies the claim. Indeed, the formula implies that we can combine $(+,\times,/)$-circuits for $f_{M_1}$, $f_{M_2\setminus d}$, and $f_{M_2/ d}$ into one for $f_M$ by adding one additional division gate and one additional multiplication gate, yielding an overall size of $s_1+s_2^\setminus + s_2^/ + 2$, as claimed.

    It remains to prove \eqref{eq:delcontr}. To this end we will compare the monomials appearing at both sides of the equation. The left-hand side has one monomial for each basis of $M$. Recall that by definition of the $2$-sum, these bases are
    \begin{align}
        &\phantom{{}={}}\{(B_1\cup B_2)-d:B_1\in\B_1,B_2\in\B_2,d\in B_1\symdiff B_2\}\notag\\
        &=\{(B_1-d)\cup B_2: B_1\in\B_1, B_2\in \B_2, d\in B_1\setminus B_2\}\notag\\&\phantom{{}={}}\cup \{B_1\cup (B_2-d):B_1\in\B_1, B_2\in \B_2, d\in B_2\setminus B_1\}\notag\\
        &\eqqcolon \B_{1,2}\cup\B_{2,1}.\label{eq:split}
    \end{align}

    Now let us look at the right-hand side of \eqref{eq:delcontr}. Looking at the first factor and partitioning it into two kinds of monomials, we obtain
    \begin{align*}
        f_{M_1}\Big(x_{E(M_1)-d},\frac{f_{M_2\setminus d}(x_{E(M_2)-d})}{f_{M_2/d}(x_{E(M_2)-d})}\Big) = \sum_{\substack{B_1 \in \B_1\\d\notin B_1}} x^{B_1} + \sum_{\substack{B_1\in\B_1\\d\in B_1}} \frac{f_{M_2\setminus d}(x_{E(M_2)-d})}{f_{M_2/d}(x_{E(M_2)-d})}\cdot x^{B_1-d}.
    \end{align*}

    Multiplying this with the remaining factor, we obtain that the entire right-hand side of \eqref{eq:delcontr} equals

    \begin{align*}
        \sum_{\substack{B_1 \in \B_1\\d\notin B_1}} f_{M_2/d}(x_{E(M_2)-d}) \cdot x^{B_1} + \sum_{\substack{B_1\in\B_1\\d\in B_1}} f_{M_2\setminus d}(x_{E(M_2)-d}) \cdot x^{B_1-d}.
    \end{align*}

    Plugging in the definition of deletion and contraction via bases, this further equals
    
    \begin{align*}
        \sum_{\substack{B_1 \in \B_1\\d\notin B_1}} \sum_{\substack{B_2 \in \B_2\\d\in B_2}} x^{B_1} \cdot x^{B_2-d} + \sum_{\substack{B_1\in\B_1\\d\in B_1}} 
        \sum_{\substack{B_2\in\B_2\\d\notin B_2}} x^{B_1-d}\cdot x^{B_2} = \sum_{B\in \B_{1,2}}x^B + \sum_{B\in \B_{2,1}} x^B,
    \end{align*}
    which equals $f_M(x)$ by the arguments leading to \eqref{eq:split}. This completes the proof.
\end{proof}

\section{Missing proofs from \Cref{sec:3-connected}}
\mainthm*
\begin{proof}
    We prove by induction on the number $n$ of elements of the matroid $M$ that $f_M$ can be represented by a $(+,\times,/)$-circuit of size $g(n)\coloneqq n^3$. The base cases are when $M$ is either graphic, cographic, or isomorphic to $\Rten$. Note that this covers all regular matroids with up to 10 elements.

    If $M$ is graphic, the statement was proven by \citet{fomin2016subtraction}. While the authors just give a bound of $O(n^3)$, it is not difficult to verify that their construction actually ensures $n^3/2$. 
    This follows independently by applying the construction of our \Cref{prop:removevertex} to the special case of graphic matroids. 
    For the cographic case, we additionally apply \Cref{lem:dual}. This results in size at most $n^3/2 + 2n\leq n^3$, for $n\geq2$. Finally, $\Rten$ has exactly 162 bases and rank 5, so implementing $f_{\Rten}$ naively via its definition results in a circuit of size $809<1000=10^3$.

    From now onwards we assume that $M$ is neither graphic nor cographic nor isomorphic to~$\Rten$. 
    In each of the following cases, we will consider two matroids $M_1$ and $M_2$. We denote the number of elements of the respective ground sets by $n_1$ and $n_2$.
    
    If $M=M_1\onesum M_2$ for some regular matroids $M_1$ and $M_2$, then, by induction and \Cref{prop:solve1sum}, we obtain that $f_M$ can be represented by a $(+,\times,/)$-circuit of size at most
    \[
        g(n_1)+g(n_2)+1
        = n_1^3+n_2^3+1
        = (n_1+n_2)^3-3n_1n_2(n_1+n_2)+1
        < n^3.
    \]

    If $M=M_1\twosum M_2$ for some regular matroids $M_1$ and $M_2$, assuming without loss of generality that $n_2\leq n_1$, then, by induction and \Cref{prop:solve2sum}, we obtain that $f_M$ can be represented by a $(+,\times,/)$-circuit of size at most
    \begin{align*}
    g(n_1) + 2g(n_2-1) + 2 = g(n_1) + 2g(n-n_1)+2=n_1^3+2(n-n_1)^3+2 \\= n^3-3n^2n_1+3nn_1^2 + (n-n_1)^3+2= n^3 - 3nn_1(n-n_1) + (n-n_1)^3+2,
    \end{align*}
    which is at most $n^3$ as both $n$ and $n_1$ are larger than $n-n_1$, such that the negative term $3nn_1(n-n_1)$ dominates $(n-n_1)^3+2$.
    
    If none of the cases considered so far applies, then by \Cref{thm:decomp} the assumption of \Cref{prop:3-connected-child} is fulfilled. Thus, we can write either $M$ or $M^*$ as $M_1\threesum M_2$ with $M_1$ being a regular matroid and $M_2$ being a 3-connected graphic matroid with $n_2\coloneqq|E(M_2)|\ge9$. By induction, \Cref{prop:removegraphic}, and \Cref{lem:dual}, we obtain that $f_M$ can be represented by a $(+,\times,/)$-circuit of size at most
    \begin{align}\label{eq:rec}
        2n+g(n_1)+n_2^3/2 = 2n + (n-n_2+3)^3 + n_2^3/2.
    \end{align}
    For fixed $n$ and $9\leq n_2\leq n$, it is easy to verify that this function is convex in $n_2$, so it attains its maximum at one of the boundary values $n_2=9$ or $n_2=n$. Plugging $n_2=9$ into \eqref{eq:rec} yields $2n+(n-6)^3+9^3/2$, which is at most $n^3$ for $n\geq9$. Plugging $n_2=n$ into \eqref{eq:rec} yields $2n+3^3+n^3/2$, which is also at most $n^3$ for $n\geq9$. This concludes the induction.

    We remark that the circuit in each of the operations of $1$-sum, $2$-sum, dualization can clearly be constructed in polynomial time. 
    Together with the polynomial bound from \Cref{prop:removegraphic}, this proves the total polynomial bound.
\end{proof}

\matrixstarmesh*
\begin{proof}
    Let $L:=AXA^\intercal$, i.e., the entries of $L$ are defined by $L_{i,j}:=\sum_{k=1}^nA_{i,k}A_{j,k}z_k$ for $i,j\in[r]$.
    In order to compute the determinant of $L$, we perform elementary row operations, such that the $r$-th column of the resulting matrix $L'$ corresponds to $(0,\dots,0,L_{r,r})^\intercal$.
    To be precise, we define 
    \begin{align*}
        L'_{i,j}:=L_{i,j}-\frac{L_{i,r}L_{j,r}}{L_{r,r}}&=\sum_{k=1}^nA_{i,k}A_{j,k}z_k-\frac{\left(\sum_{k=1}^nA_{i,k}A_{r,k}z_k\right)\cdot\left(\sum_{k=1}^nA_{j,k}A_{r,k}z_k\right)}{\sum_{k=1}^nA_{r,k}^2z_k}\\
        &=\sum_{k=1}^nA_{i,k}A_{j,k}z_k-\frac{\sum_{\alpha=1}^n\sum_{\beta=1}^nA_{i,\alpha}A_{r,\alpha}A_{j,\beta}A_{r,\beta}z_\alpha z_\beta}{y},
    \end{align*}
    for $i\in[r-1]$ and $j\in[r]$, and $L'_{r,j}:=L_{r,j}$ for $j\in[r]$, where $y:=\sum_{i=1}^nA_{r,i}^2z_i=\sum_{i\in N_1}z_i$.
    Now, by Laplace expansion on the $r$-th column of $L'$, we obtain that $\mathrm{det}(L)=\mathrm{det}(L')=y\cdot\mathrm{det}(L'')$, where $L''$ is the restriction of $L'$ to the first $r-1$ rows and columns.
    We claim that $L'_{i,j}=(A'X'A'^\intercal)_{i,j}=:\widetilde L_{i,j}$ for $i,j\in[r-1]$.
    
    Note that both the definition of $L'_{i,j}$ and $A'$ are invariant under multiplying columns of $A$ with $-1$.
    Thus, we can assume without loss of generality that $A_{r,i}=1$ for $i\in N_1$.
    Therefore, we have that $A'_{i,(j,k)}:=A_{i,j}-A_{i,k}$ for $i\in[r-1]$ and $j,k\in N_1$ with $j<k$.

    We proceed to modify the expression of $L'$ in the following way:
    \begin{align*}
        L'_{i,j}&=\sum_{k\in N_0}A_{i,k}A_{j,k}z_k+\sum_{k\in N_1}\left(A_{i,k}A_{j,k}z_k-A_{i,k}A_{j,k}\frac{z_k^2}{y}\right)\\
        &\hspace{4cm}-\sum_{\substack{\alpha,\beta\in N_1\\\alpha<\beta}}(A_{i,\alpha}A_{j,\beta}+A_{j,\alpha}A_{i,\beta})\frac{z_\alpha z_\beta}{y}\\
        &=\sum_{k\in N_0}A_{i,k}A_{j,k}z_k+\sum_{k\in N_1}A_{i,k}A_{j,k}\frac{z_k(y-z_k)}{y}\\
        &\hspace{4cm}-\sum_{\substack{\alpha,\beta\in N_1\\\alpha<\beta}}(A_{i,\alpha}A_{j,\beta}+A_{j,\alpha}A_{i,\beta})\frac{z_\alpha z_\beta}{y}
    \end{align*}

    We focus on the central term:
    \begin{align*}
        \sum_{k\in N_1}A_{i,k}A_{j,k}\frac{z_k(y-z_k)}{y}&=\sum_{\alpha\in N_1}A_{i,\alpha}A_{j,\alpha}\left(\sum_{\substack{\beta\in N_1\\\alpha\neq\beta}}z_\beta\right)\frac{z_\alpha}{y}\\
        &=\sum_{\substack{\alpha,\beta\in N_1\\\alpha<\beta}}(A_{i,\alpha}A_{j,\alpha}+A_{i,\beta}A_{j,\beta})\frac{z_\alpha z_\beta}{y}
    \end{align*}

    Similarly, by definition of $\widetilde L$, we have
    \begin{align*}
        \widetilde L_{i,j}&=\sum_{k\in N_0}A_{i,k}A_{j,k}z_k+\sum_{\substack{\alpha,\beta\in N_1\\\alpha<\beta}}A'_{i,(\alpha,\beta)}A'_{j,(\alpha,\beta)}\frac{z_\alpha z_\beta}{y}\\
        &=\sum_{k\in N_0}A_{i,k}A_{j,k}z_k\\
        &\hspace{1cm}+\sum_{\substack{\alpha,\beta\in N_1\\\alpha<\beta}}\left((A_{i,\alpha}A_{j,\alpha}+A_{i,\beta}A_{j,\beta})-(A_{i,\alpha}A_{j,\beta}+A_{j,\alpha}A_{i,\beta})\right)\frac{z_\alpha z_\beta}{y}
    \end{align*}
    for $i,j\in[r-1]$.
    Since $L'_{i,j}=\widetilde L_{i,j}$ for all $i,j\in[r-1]$, the respective determinants coincide as claimed.
\end{proof}

\removevertex*

\begin{proof}

    By \Cref{cut_cocircuit}, the set of edges $\{v_iv_\ell:i\in[\ell-1]\}$ corresponds to a cocircuit of $M$ and by \Cref{fund_cocircuit}, we can efficiently find a binary representation $A^{\binary}$ of $M$, such that $\{v_iv_\ell:i\in[\ell-1]\}=\mathrm{supp}(A_r)$.
    Further, we can efficiently find a signing such that $A\in\{0,\pm1\}^{r\times n}$ is totally unimodular and $\mathrm{supp}(A^{\binary})=\mathrm{supp}(A)$, see~\cite{camion1964matrices} and~\cite[Corollary~9.2.7]{truemper1992matroid}.
    It remains to show that the matrix $A'$ as defined in \Cref{matrix-star-mesh} represents the matroid $M'$ up to parallel elements.

    For this, we claim that any triple of edges $\{v_iv_j,v_iv_\ell,v_jv_\ell:i,j\in[\ell-1],i<j\}$ forms a circuit of $M$.
    We fix some $i,j\in[\ell-1]$ where $i<j$ and denote the corresponding elements of $M$ by $\{e_{ij},e_{i\ell},e_{j\ell}\}$.
    By definition of the graphic matroid, each such triple forms a circuit of $M(K_\ell)$.
    In addition, by the definition of the $\Delta$-sum, no such triple is contained in a basis of $M$.
    In contrast, for any pair of edges of $K_\ell$, there exists a basis of $M$ containing it, which implies that each triple is indeed minimally dependent.
    Since $A$ is totally unimodular, we can find coefficients in $\pm1$ such that the corresponding scaled columns sum up to the all-zero vector.
    This implies that the column of $A'$ obtained by the pair $(i,j)$ (or its negative) exists already in $A$.
    Therefore, each such pair creates a new copy of an existing element. 
    We remove the created parallel copies and sum the corresponding entries in $X'$, which preserves $A'X'A'^\intercal$ and coincides with the definition of $z'_{ij}$.

    Thus, we can apply \Cref{thm:matrix-tree} and \Cref{matrix-star-mesh} to obtain
    \begin{equation*}
        f_M=\mathrm{det}(AXA^\intercal)=y\cdot\mathrm{det}(A'X'A'^\intercal)=y\cdot f_{M'}.
    \end{equation*}
\end{proof}

\section{Generalization to MFMC matroids}
While our original motivation for the problem was for regular matroids, in fact it is easy to check that our algorithm to construct arithmetic circuits for the basis generating polynomial works more broadly in matroids that can be decomposed via $1$- and $2$-sums into base blocks that admit the efficient construction of such an arithmetic circuit.
This is in line with previous results using Seymour's decomposition, see for instance~\cite{dinitz2014matroid, berczi2024reconfiguration}.
One central such class of matroids is the \emph{Max-Flow-Min-Cut (MFMC) matroids}, which were introduced by Seymour~\cite{seymour1977matroids}, and admit a similar decomposition theorem to regular matroids.

To be more precise, we define the matroid $F_7$ as the binary matroid with representation matrix $A:=(I_3\,A^7)$, with
\label{sec:mfmc}
\[
A^7:=\begin{bmatrix}1 & 0 & 1 & 1\\1 & 1 & 0 & 1\\0 & 1 & 1 & 1\end{bmatrix}.
\]
Then, the class of MFMC matroids is defined to be the class of matroids that can be decomposed into regular matroids and copies of $F_7$ by repeated $1$-, and $2$-sum decompositions.
As such, MFMC matroids form a proper minor-close subclass of binary matroids, that contains all regular matroids. 
Since $F_7$ has bounded size, and our construction for the $1$- and $2$-sum works for general matroids, see \Cref{prop:solve1sum} and \Cref{prop:solve2sum}, we obtain the following corollary.

\begin{corollary}\label{cor:MFMC}
    For a MFMC matroid $M$ with $n$ elements, there is a $(+,\times,/)$-circuit of size $O(n^3)$ computing the basis generating polynomial $\bgp{M}$.
    Given an independence oracle of $M$, this circuit can be constructed in polynomial time.
\end{corollary}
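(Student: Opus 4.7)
The plan is to run the same inductive strategy as in the proof of \Cref{main-thm}, but in a much simpler setting since the MFMC decomposition only involves $1$- and $2$-sums. Concretely, I would induct on the number of elements $n$ and show that the basis generating polynomial $f_M$ of every MFMC matroid $M$ on $n$ elements is computed by a $(+,\times,/)$-circuit of size at most $cn^3$ for a suitable absolute constant $c$, constructible in polynomial time from an independence oracle.

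For the base cases, if $M$ is regular, \Cref{main-thm} already supplies a circuit of size $n^3\le cn^3$ in polynomial time. If $M\cong F_7$, the matroid has a constant number of elements and a constant number of bases, so a naive circuit read off directly from the definition $f_{F_7}=\sum_{B\in\B}x^B$ has constant size; choosing $c$ large enough to absorb this constant (and already valid for all $n\ge 7$) handles the base case.

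For the inductive step, by definition of MFMC matroids any $M$ that is not a base case can be written as $M=M_1\onesum M_2$ or $M=M_1\twosum M_2$ for strictly smaller MFMC matroids $M_1,M_2$. In the $1$-sum case, apply \Cref{prop:solve1sum} to the circuits obtained by induction. In the $2$-sum case, apply \Cref{prop:solve2sum}, which additionally requires circuits for $M_2\setminus d$ and $M_2/d$, where $d$ is the common element. Since MFMC matroids form a minor-closed class, both $M_2\setminus d$ and $M_2/d$ are MFMC on $n_2-1$ elements, so the induction hypothesis applies. Taking $M_2$ to be the smaller of the two pieces, the resulting recursions for the circuit size coincide (up to the absorbed $F_7$ constant) with the $1$- and $2$-sum branches of the analysis in the proof of \Cref{main-thm}, and the bound $cn^3$ is therefore preserved.

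The main obstacle is the algorithmic side, and it is conceptual rather than technical: one needs to produce the MFMC decomposition from an independence oracle in polynomial time. This is handled by invoking the algorithmic version of the MFMC decomposition theorem extending Seymour's algorithm for regular matroids, as cited in the paper via \cite{seymour1977matroids} and \cite[Corollary~12.3.22]{oxley2006matroid}. Once the decomposition tree is at hand, a bottom-up traversal combines the circuits using \Cref{main-thm}, \Cref{prop:solve1sum}, and \Cref{prop:solve2sum}, each of which is a polynomial-time construction; the total running time is polynomial since both the size of the tree and the work at each node are polynomial in $n$.
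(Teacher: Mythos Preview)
Your proposal is correct and follows essentially the same approach as the paper: the paper's proof is a one-sentence remark that $F_7$ has bounded size and the $1$- and $2$-sum constructions (\Cref{prop:solve1sum} and \Cref{prop:solve2sum}) work for general matroids, so the MFMC decomposition together with \Cref{main-thm} immediately yields the result. Your inductive argument, including the minor-closure observation for the $2$-sum step and the algorithmic remarks, is exactly the natural way to spell this out.
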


\end{document}